\setlist[enumerate]{itemsep=2pt,topsep=3pt}
\setlist[itemize]{itemsep=2pt,topsep=3pt}
\setlist[enumerate,1]{label=(\alph*)}
\renewcommand{\leq}{\leqslant}
\renewcommand{\geq}{\geqslant}
\DeclarePairedDelimiter\floor{\lfloor}{\rfloor}
\newcommand{\setntn}[2]{ \{ #1 : #2 \} }
\newcommand{\preqsd}{\preceq_{sd} }
\newcommand{\toweak}{\stackrel { w } {\to} }
\newcommand{\disteq}{\stackrel { \mathscr D } {=} }
\newcommand{\1}{\mathbbm 1}
\newcommand{\A}{\forall}
\newcommand*\diff{\mathop{}\!\mathrm{d}}
\newcommand{\cC}{\mathcal C}
\newcommand{\ccC}{\mathscr C}
\newcommand{\bB}{\mathcal B}
\newcommand{\oO}{\mathcal O}
\newcommand{\mM}{\mathscr M}
\newcommand{\fF}{\mathscr F}
\newcommand{\pP}{\mathscr P}
\newcommand{\RR}{\mathbbm R}
\newcommand{\NN}{\mathbbm N}
\newcommand{\PP}{\mathbbm P}
\newcommand{\GG}{\mathbbm G}
\newcommand{\EE}{\mathbbm E \,}
\newcommand{\bQ}{\mathbf Q}
\theoremstyle{plain}
\newtheorem{theorem}{Theorem}[section]
\newtheorem{corollary}{Corollary}[section]
\newtheorem{lemma}{Lemma}[section]
\newtheorem{proposition}{Proposition}[section]
\theoremstyle{definition}
\newtheorem{example}{Example}[section]
\newtheorem{remark}{Remark}[section]
\newtheorem{assumption}{Assumption}[section]
\begin{document}

\title[Classical and Monotone Markov Chains]{A Unified Stability Theory for Classical and Monotone Markov Chains}

\author{Takashi Kamihigashi}
\address{RIEB, Kobe University}
\email{tkamihig@rieb.kobe-u.ac.jp}

\author{John Stachurski}
\address{Research School of Economics, Australian National University}
\email{john.stachurski@anu.edu.au}
\thanks{This paper was written in part while the second author was visiting RIEB at Kobe
      University.  We have benefited from financial support from the Japan
      Society for the Promotion of Science (KAKENHI 15H05729) and Australian
      Research Council Discovery Grant DP120100321.}

\date{\today}

\begin{abstract}
    This paper integrates two strands of the literature on stability of
    general state Markov chains: conventional, total variation based results
    and more recent order-theoretic results.  First we introduce a complete
    metric over Borel probability measures based on ``partial'' stochastic
    dominance.  We then show that many conventional results framed in the
    setting of total variation distance have natural generalizations to the
    partially ordered setting when this metric is adopted.

    \vspace{1em}

    \noindent
    \textit{Keywords:} Total variation, Markov chains, stochastic domination, coupling

    \noindent
    \textit{MSC classifications} Primary: 60J05, 60J99; secondary: 54E50, 06A06.
\end{abstract}

\maketitle

%section
\section{Introduction}

\label{s:intro}

Following the work of Wolfgang Doeblin \cite{doeblin1938expose,
doeblin1938proprietes, doeblin1940elements}, many classical results from Markov
chain theory have built on fundamental connections between total
variation distance, Markov chains and couplings.  For some models, however,
total variation convergence is too strong.  In
response, researchers have developed an alternative methodology based on monotonicity 
\cite{dubins1966invariant, yahav1975fixed, bhattacharya1988asymptotics}.    In this line of research, transition probabilities are
assumed to have a form of monotonicity not required in the classical theory.
At the same time, mixing conditions are generally weaker, as is the
notion of convergence to the stationary distribution.\footnote{Further contributions
to this approach can be found in \cite{razin1979stochastic,
hopenhayn1992stochastic, bhattacharya2010limit, kamihigashi2012order}.
For some recent extensions and applications in economics see
\cite{kamihigashi2014stochastic}.}

To give one example, consider a Markov chain $\{X_t\}$ defined by
\begin{equation}
    \label{eq:bern}
    X_{t+1} = \frac{X_t + W_{t+1}}{2}
\end{equation}
where $\{W_t\}_{t \geq 1}$ is an {\sc IID} Bernoulli$(1/2)$ random sequence, taking
values $0$ and $1$ with equal probability.  For the state space take
$S = [0, 1]$.  Let $P^t(x, \cdot)$ be the distribution of $X_t$ given $X_0 =
x \in S$.  Clearly, if $X_t$ is a rational number in $S$, then so is
$X_{t+1}$.  Similarly, if $X_t$ is irrational, then so is $X_{t+1}$.
Thus, if $x$ and $y$ are rational and irrational respectively, the
distributions $P^t(x, \cdot)$ and $P^t(y, \cdot)$ are concentrated on disjoint
sets,  and hence,  when $\| \cdot \|$ is the total variation norm,
\begin{equation}
    \label{eq:sm}
    \| P^t(x, \cdot) - P^t(y, \cdot) \| = 2
\end{equation}
for all $t \in \NN$.  Total variation convergence fails for this class of
models.

At the same time, the right hand side of \eqref{eq:bern} is increasing in
the current state for each fixed value of the shock $W_{t+1}$.  Moreover,
trajectories mix in a monotone sense: A trajectory starting at $X_0 = 0$ can
approach $1$ with a suitable string of shocks and a trajectory starting at $1$
can approach $0$.  Using these facts one can show using the results in
\cite{bhattacharya1988asymptotics}, say, that a unique stationary distribution
exists and the distribution of $X_t$ converges to it in a complete metric
defined over the Borel probability measures that is weaker than total
variation convergence.

This is one example where monotone methods can be used to establish some form
of stability, despite the fact that the classical conditions based around
total variation convergence fail. Conversely, there are many models that the
monotone methods developed in \cite{bhattacharya1988asymptotics} and related
papers cannot accommodate, while the classical theory based around total
variation convergence handles them easily.  One example is the simple
``inventory'' model 
\begin{equation}
    \label{eq:invent}
    X_{t+1} = 
    \begin{cases}
        (X_t - W_{t+1})_+ & \quad\text{if } X_t > 0
        \\
        (K - W_{t+1})_+ & \quad\text{if } X_t = 0,
    \end{cases}
\end{equation}
where $x_+ := \max\{x, 0\}$.  Again $\{W_t\}$ is {\sc IID}. Assume
that $\ln W_t$ is standard normal.  The state space we take to be $S = [0, K]$.  Figure~\ref{f:inventory} shows a
typical trajectory when $K=100$ and $X_0 = 50$.  

\begin{figure}
    \centering
    \scalebox{0.6}{\includegraphics[clip=true, trim=0mm 0mm 0mm 0mm]{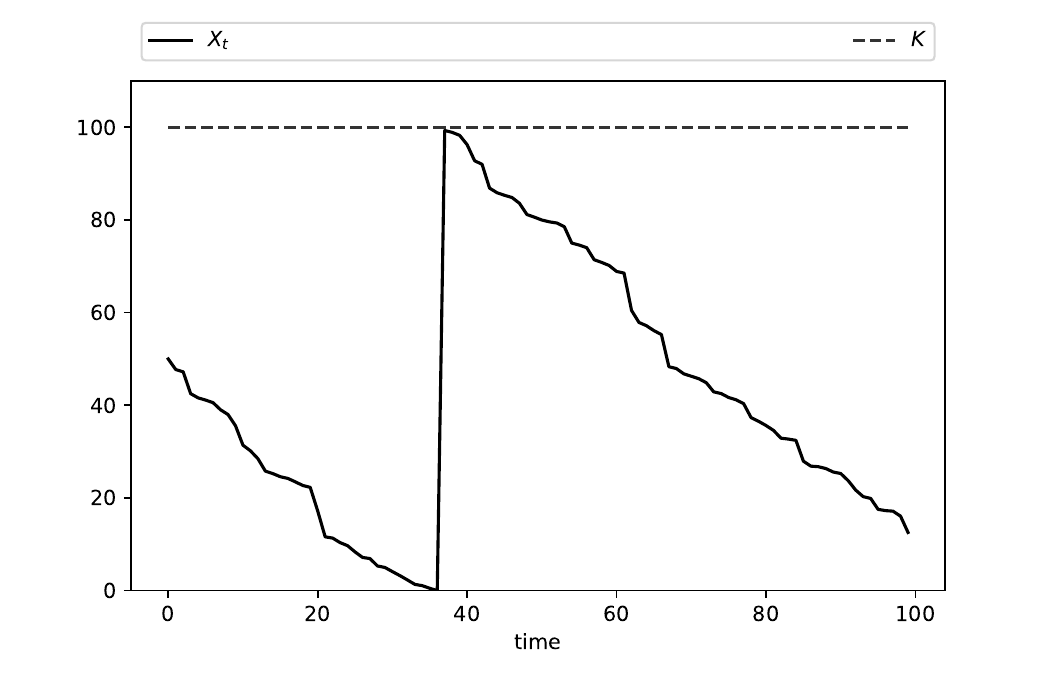}}
    \caption{\label{f:inventory} A time series from the inventory model}
\end{figure}

On one hand, the monotone methods in \cite{bhattacharya1988asymptotics} cannot be applied here
because of a failure of monotonicity with respect to the standard ordering of
$\RR$.  On the other hand, the classical theory based around total variation
convergence is straightforward to apply.  For example, one can use Doeblin's
condition (see, e.g., \cite{meyn2012markov}, theorem~16.2.3) to show the
existence of a unique stationary distribution to which the distribution of
$X_t$ converges in total variation, regardless of the distribution of $X_0$. 
In the terminology of \cite{meyn2012markov}, the process is uniformly ergodic.

The purpose of this paper is to show that both of these stability results
(i.e, the two sets of results concerning the two models \eqref{eq:bern} and
\eqref{eq:invent}), which were based on two hitherto separate approaches, can be derived from the same
theoretical framework.  More generally, we construct stability results that
encompasses all uniformly ergodic models in the sense of \cite{meyn2012markov}
and all monotone models shown to be stable in
\cite{bhattacharya1988asymptotics}, as well as extending to other monotone or
partially monotone models on state spaces other than $\RR^n$.

We begin our analysis by introducing what is shown to be a
complete metric $\gamma$ on the set of Borel probability measures on a
partially ordered Polish space that includes total variation distance, the
Kolmogorov uniform distance and the Bhattacharya distance
\cite{bhattacharya1988asymptotics, chakraborty1998completeness} as special
cases.  We show that many fundamental concepts from conventional Markov chain
theory using total variation distance and coupling have direct generalizations
to the partially ordered setting when this new metric is adopted.  Then, by
varying the choice of partial order, we recover key aspects of both classical
total variation based stability theory and monotone methods as special
cases.\footnote{There is one additional line of research that deals with Markov
    models for which the classical conditions of irreducibility and total
    variation convergence fail.  In this line of analysis, irreducibility is
    replaced by an assumption that the law of motion for the state is itself
    contracting ``on average,'' and this contractivity is then passed on to
    an underlying metric over distributions that conforms in some way to the
    topology of the state space.  See, for example,
\cite{diaconis1999iterated} or \cite{wu2004limit}.  Such results can
be used to show stability of our first example, which contracts on average
with respect to the usual metric on $\RR$.  On the other hand, it cannot be
applied to our second (i.e., inventory) example using the same metric, since the law of motion
contains a jump.  In \cite{bhattacharya1988asymptotics} and
\cite{hopenhayn1992stochastic} one can find many other applications where
monotone methods can be used---including the results developed here---while the ``contraction on average'' conditions
of \cite{diaconis1999iterated} and \cite{wu2004limit} do not hold.   In
general, our results should be understood as complements rather than
substitutes when compared to average contractions.}

After preliminaries, we begin with a discussion of ``ordered'' affinity, which
generalizes the usual notion of affinity for measures.  The concept of ordered
affinity is then used to define the total ordered variation metric. Throughout
the paper, longer proofs are deferred to the appendix.  The conclusion
contains suggestions for future work.

\section{Preliminaries}

\label{s:prel}

Let $S$ be a Polish (i.e., separable and completely metrizable) space, let $\oO$ be the open sets, let $\cC$ be the closed
sets and let $\bB$ be the
Borel sets. Let  $\mM_s$ denote the set of all finite signed measures on $(S,
\bB)$.  In other words, $\mM_s$ is all countably additive set functions from
$\bB$ to $\RR$.  Let $\mM$ and $\pP$ be the finite measures and probability
measures in $\mM_s$, respectively.  
If $\kappa$ and $\lambda$ are
in $\mM_s$, then $\kappa \leq \lambda$ means that $\kappa(B) \leq \lambda(B)$
for all $B \in \bB$.    
The symbol $\delta_x$ denotes the probability measure concentrated on $x \in
S$.

Let $bS$ be the set of all bounded $\bB$-measurable functions from $S$ into
$\RR$.  If $h \in bS$ and $\lambda \in \mM_s$, then $\lambda(h) := \int h \diff \lambda$.
For $f$ and $g$ in $bS$, the statement $f \leq g$ means that $f(x) \leq g(x)$ for all $x \in S$.
Let
\begin{equation*}
    H := \setntn{ h \in bS }{ -1 \leq h \leq 1}
    \quad \text{and} \quad
    H_0 := \setntn{ h \in bS }{ 0 \leq h \leq 1}.
\end{equation*}
The \emph{total variation norm} of $\lambda \in \mM_s$ is $\| \lambda \| := \sup_{h \in H} |\lambda(h)|$.
Given $\mu$ and $\nu$ in $\pP$, a random element $(X,Y)$ taking values in $S
\times S$ and defined on a common probability space $(\Omega, \fF, \PP)$ is called a
\emph{coupling} of $(\mu, \nu)$ if $\mu = \PP \circ X^{-1}$ and $\nu = \PP
\circ Y^{-1}$ (i.e., if the distribution of $(X,Y)$ has marginals $\mu$ and $\nu$ respectively---see, e.g.,
\cite{lindvall2002lectures} or \cite{thorisson2000coupling}). The set of all
couplings of $(\mu, \nu)$ is denoted below by $\ccC(\mu, \nu)$.  
A sequence $\{\mu_n\} \subset \pP$ converges to $\mu \in \pP$
\emph{weakly} if $\mu_n(h) \to \mu(h)$ as $n \to \infty$ for all
continuous $h \in bS$.  In this case we write  $\mu_n \toweak \mu$.

Given $\mu$ and $\nu \in \mM$, their measure theoretic \emph{infimum}
$\mu \wedge \nu$ is the largest element of $\mM$ dominated by both $\mu$ and
$\nu$.  It can be defined by taking $f$ and $g$ to be densities of $\mu$ and
$\nu$ respectively under the dominating measure $\lambda := \mu + \nu$ and defining $\mu
\wedge \nu$ by $(\mu \wedge \nu)(B) := \int_B \, \min\{ f(x), g(x) \} \lambda(\diff x)$ for all $B \in \bB$.
The total variation distance 
between $\mu$ and $\nu$ is related to $\mu \wedge \nu$ via
    $\| \mu - \nu \| 
    = \| \mu \| + \| \nu \| - 2 \| \mu \wedge \nu \|$.
See, for example, \cite{pollard2002user}.  For probability measures we also have 
\begin{equation}
    \label{eq:sisf}
    \sup_{B \in \bB} \{ \mu(B) - \nu(B) \}
    = \sup_{B \in \bB} |\mu(B) - \nu(B) |
    = \| \mu - \nu \| / 2.
\end{equation}

The \emph{affinity} between two measures $\mu, \nu$ in $\mM$ is the value
    $\alpha(\mu, \nu) := (\mu \wedge \nu)(S)$.
The following properties are elementary:

\begin{lemma}
    \label{l:poaf}
    For all $(\mu, \nu) \in \mM \times \mM$ we have
    \begin{enumerate}
        \item $0 \leq \alpha(\mu, \nu) \leq \min\{\mu (S), \nu(S) \}$
        \item $\alpha(\mu, \nu) = \mu(S) = \nu(S)$ if and only if $\mu = \nu$.
        \item $\alpha(c \mu, c \nu) = c\alpha(\mu, \nu)$ for all $c \geq 0$.
    \end{enumerate}
\end{lemma}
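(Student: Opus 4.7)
The plan is to handle each of the three properties by working directly with the density representation of $\mu \wedge \nu$ given just before the lemma. Fix $\lambda := \mu + \nu$ and let $f, g$ be Radon--Nikodym densities of $\mu, \nu$ with respect to $\lambda$, so that $(\mu \wedge \nu)(B) = \int_B \min\{f, g\} \diff \lambda$.

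For part (a), I would note that $\min\{f, g\} \geq 0$ gives $\alpha(\mu,\nu) \geq 0$, while $\min\{f,g\} \leq f$ and $\min\{f,g\} \leq g$ together with monotonicity of the integral give $(\mu \wedge \nu)(S) \leq \mu(S)$ and $(\mu \wedge \nu)(S) \leq \nu(S)$. For part (b), the ``if'' direction is immediate: if $\mu = \nu$ then $f = g$ $\lambda$-a.e., so $\min\{f,g\} = f$ $\lambda$-a.e.\ and $\alpha(\mu,\nu) = \mu(S)$. For ``only if,'' suppose $\alpha(\mu,\nu) = \mu(S) = \nu(S)$. Then $\int (f - \min\{f,g\}) \diff \lambda = 0$ with a non-negative integrand, forcing $f = \min\{f,g\}$ $\lambda$-a.e., and symmetrically $g = \min\{f,g\}$ $\lambda$-a.e. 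Hence $f = g$ $\lambda$-a.e., and $\mu = \nu$. For part (c), dispose of $c = 0$ separately (both sides are zero), and for $c > 0$ observe that $c\mu + c\nu = c\lambda$, and that $f$ and $g$ are also densities of $c\mu$ and $c\nu$ with respect to $c\lambda$, since $c\mu(B) = c\int_B f \diff \lambda = \int_B f \diff (c\lambda)$. Applying the definition of $\wedge$ with $c\lambda$ as the dominating measure yields $(c\mu \wedge c\nu)(B) = \int_B \min\{f,g\} \diff(c\lambda) = c(\mu \wedge \nu)(B)$, and evaluating at $B = S$ finishes the claim.

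There is no real obstacle here; the only small subtlety worth pausing on is part (c), where one must be explicit that rescaling the measures rescales the dominating measure by the same factor so that $f, g$ continue to serve as densities. Once that is settled, each property reduces to a single line about the pointwise minimum of non-negative integrable functions.
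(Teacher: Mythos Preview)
Your argument is correct in all three parts. The paper does not actually prove this lemma: it introduces the statement with ``The following properties are elementary'' and gives no proof, later observing that lemma~\ref{l:poaf} is the identity-order special case of lemma~\ref{l:poaf2} (which it does prove). Your direct verification via the density representation of $\mu \wedge \nu$ is exactly the natural way to see these facts and is entirely in keeping with what the paper intends by ``elementary.''
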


There are several other common representations of affinity.  
For example, when $\mu$ and $\nu$ are both probability measures, we have
\begin{equation}
    \label{eq:affc}
    \alpha(\mu, \nu) 
    = 1 - \sup_{B \in \bB} | \mu(B) - \nu(B) |
    = \max_{(X, Y) \in \ccC(\mu, \nu)} \PP\{ X = Y \}.
\end{equation}
(See, e.g., \cite{pollard2002user, lindvall2002lectures}.) The second equality
in \eqref{eq:affc} states that, if $(X,Y) \in \ccC(\mu, \nu)$, then $\PP\{X =
Y\} \leq \alpha(\mu, \nu)$, and, moreover, there exists a $(X, Y) \in
\ccC(\mu, \nu)$ such that equality is attained.  Any such coupling is called a
\emph{maximal} or \emph{gamma} coupling.  See theorem 5.2 of
\cite{lindvall2002lectures}.  From \eqref{eq:sisf} and \eqref{eq:affc} we obtain
\begin{equation}
    \label{eq:tvmtap}
    \| \mu - \nu \| = 2(1 - \alpha(\mu, \nu)).
\end{equation}

\section{Ordered Affinity}

\label{s:otg}

We next introduce a generalization of affinity when $S$ has a partial order.
We investigate its properties in detail, since both our metric and 
the stability theory presented below rely on this concept.

\subsection{Preliminaries}

As before, let $S$ be a Polish space.  A closed partial order $\preceq$ on
$S$ is a partial order $\preceq$ such that its graph 
\begin{equation*}
    \GG := \setntn{(x, y) \in S \times S}{x \preceq y}
\end{equation*}
is closed in the product topology.  In the sequel, a \emph{partially ordered
Polish space} is any such pair $(S, \preceq)$, where $S$ is nonempty
and Polish, and $\preceq$ is a closed partial order on $S$.  When no confusion
arises, we denote it simply by $S$.\footnote{The partial order is assumed to
    be closed in the theory developed below because we build a metric over
    Borel probability measures that
    depends on this partial order and closedness of the order is required to show
that the metric is complete.}

For such a space $S$, we call $I \subset S$ \emph{increasing} if $x \in I$ and $x \preceq y$ implies $y \in I$.    We call
$h \colon S \to \RR$ \emph{increasing} if $x \preceq y$ implies $h(x) \leq
h(y)$.  We let $i\bB$, $i\oO$ and $i\cC$ denote the increasing Borel, open and
closed sets, respectively, while $ibS$ is the increasing functions in $bS$.  In
addition, 
\begin{itemize}
    \item $iH := H \cap ibS = \setntn{h \in ibS}{-1 \leq h \leq 1}$ and
    \item $iH_0 := H_0 \cap ibS = \setntn{h \in ibS}{0 \leq h \leq 1}$.
\end{itemize}
If $B \in \bB$, then
    $i(B)$ is all $y \in S$ such that $x \preceq y$ for some $x \in B$,
    while $d(B)$ is all $y \in S$ such that $y \preceq x$ for some $x \in B$.
Given $\mu$ and $\nu$ in $\mM$, we say that $\mu$ is
\emph{stochastically dominated} by $\nu$ and write $\mu \preqsd \nu$ if
$\mu(S) = \nu(S)$ and $\mu(I) \leq \nu(I)$ for all $I \in i\bB$.
Equivalently,
$\mu(S) = \nu(S)$ and $\mu(h) \leq \nu(h)$ for all $h$ in $iH$ or
$iH_0$.  See \cite{kamae1978stochastic}.

One important partial order on $S$ is the \emph{identity order}, where $x
\preceq y$ if and only if $x = y$.  Then $i\bB = \bB$, $ibS = bS$, $iH = H$,
$iH_0 = H_0$ and $\mu \preqsd \nu$ if and only if $\mu = \nu$.

\begin{remark}
    \label{r:sfe}
    Since $S$ is a partially ordered Polish space, for any $\mu,\nu$
    in $\pP$ we have $\mu = \nu$ whenever $\mu(C) = \nu(C)$ for all $C \in
    i\cC$, or, equivalently, $\mu(h) = \nu(h)$ for all continuous $h \in ibS$.
    See \cite[lemma~1]{kamae1978stochastic}.  Hence
    $\mu \preqsd \nu$ and $\nu \preqsd \mu$ imply $\mu = \nu$.
\end{remark}

\begin{lemma}
    \label{l:cfs}
    If $\lambda \in \mM_s$, then
        $\sup_{I \in i\bB} \lambda(I) = \sup_{h \in iH_0} \lambda(h)$
    and
    \begin{equation}
        \label{eq:cfs2}
        \sup_{h \in iH} |\lambda(h)|
        = \max \left\{
            \sup_{h \in iH} \lambda(h),
            \; \sup_{h \in iH} (-\lambda)(h)
            \right\}.
    \end{equation}
\end{lemma}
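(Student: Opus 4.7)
The plan is to establish the two equalities separately; the first carries all the substantive content, while the second reduces to a simple algebraic identity.

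For the first equality $\sup_{I \in i\bB} \lambda(I) = \sup_{h \in iH_0} \lambda(h)$, the direction $\leq$ is immediate: for any $I \in i\bB$, the indicator $\1_I$ lies in $iH_0$ (it is Borel measurable, bounded in $[0,1]$, and increasing because $I$ is increasing) and satisfies $\lambda(\1_I) = \lambda(I)$.

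For the reverse direction, I would invoke the layer cake representation $h(x) = \int_0^1 \1\{h(x) > t\} \diff t$, valid for every $h \in iH_0$. For each $t \in [0,1]$, the superlevel set $A_t := \{h > t\}$ is Borel (since $h \in bS$) and increasing: if $h(x) > t$ and $x \preceq y$, then monotonicity of $h$ gives $h(y) \geq h(x) > t$. Writing $\lambda = \lambda^+ - \lambda^-$ via the Jordan decomposition and applying Tonelli's theorem to each of the finite positive measures $\lambda^\pm$ yields $\lambda(h) = \int_0^1 \lambda(A_t) \diff t$. Since $A_t \in i\bB$ for each $t$, the integrand is pointwise bounded above by $\sup_{I \in i\bB} \lambda(I)$, and integrating over $[0,1]$ gives $\lambda(h) \leq \sup_{I \in i\bB} \lambda(I)$. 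Taking the supremum over $h \in iH_0$ closes the argument.

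For the second equality \eqref{eq:cfs2}, the proof is essentially pointwise: for every $h \in iH$, $|\lambda(h)| = \max\{\lambda(h), (-\lambda)(h)\}$. Taking the supremum over $h \in iH$ and using the elementary identity $\sup_h \max\{a(h), b(h)\} = \max\{\sup_h a(h), \sup_h b(h)\}$ (both directions follow immediately from the definition of supremum) yields the claim. The only mild obstacle is justifying the interchange of integration in the layer cake step, which is handled cleanly by splitting $\lambda$ through its Jordan decomposition so that Tonelli applies to each finite positive piece.
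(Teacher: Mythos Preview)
Your proof is correct. The second equality you handle exactly as the paper does (via $|a|=\max\{a,-a\}$ and the interchange of $\sup$ and $\max$), so only the first equality differs.

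For the nontrivial inequality $\sup_{h\in iH_0}\lambda(h)\le\sup_{I\in i\bB}\lambda(I)$ you use the layer cake identity $h=\int_0^1 \1\{h>t\}\diff t$, observe that each superlevel set $\{h>t\}$ lies in $i\bB$, and apply Tonelli to $\lambda^+$ and $\lambda^-$ separately to obtain $\lambda(h)=\int_0^1\lambda(\{h>t\})\diff t\le \sup_{I\in i\bB}\lambda(I)$. The paper instead proceeds by a direct discretization: it approximates $h$ by a step function $h_n$ taking values in $\{0,1/n,\ldots,1\}$, then iteratively modifies $h_n$ on the gaps between its level sets---pushing values up to $1$ or down to the next level depending on the sign of $\lambda$ on that gap---until the resulting function is the indicator of a single increasing set $A_{n-1}$ with $\lambda(A_{n-1})\ge\lambda(h_n)\ge\lambda(h)-\|\lambda\|/n$. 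Your route is considerably shorter and more conceptual; the paper's argument is more elementary in that it avoids Fubini--Tonelli entirely, working only with finite sums and pointwise comparisons, at the cost of a longer inductive construction. Both land at the same conclusion, and your appeal to Tonelli is fully justified since $\lambda^\pm$ are finite and the map $t\mapsto\lambda^\pm(\{h>t\})$ is monotone, hence measurable.
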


The proof is in the appendix.  One can easily check that
\begin{equation}
    \label{eq:chho}
    \lambda \in \mM_s
    \text{ and }  \lambda(S) = 0
    \quad \implies \quad
    \sup_{h \in iH}
    \lambda(h) = 2 \sup_{h \in iH_0} \lambda(h).
\end{equation}

\subsection{Definition of Ordered Affinity}

For each pair $(\mu, \nu) \in \mM \times \mM$, let
\begin{equation*}
    \Phi(\mu, \nu) 
    := \setntn{ (\mu', \nu') \in \mM \times \mM }
        {\mu' \leq \mu,\; \nu' \leq \nu,\; \mu' \preqsd \nu'}.
\end{equation*}
We call $\Phi(\mu, \nu)$ the set of \emph{ordered component pairs} for
$(\mu, \nu)$. Here ``ordered'' means ordered by stochastic dominance.  
The set of ordered component pairs is always nonempty, since
$(\mu \wedge \nu, \mu \wedge \nu)$ is an element of $\Phi(\mu, \nu)$.

\begin{example}
    If $\mu$ and $\nu$ are two measures satisfying $\mu \preqsd \nu$, then
    $(\mu, \nu) \in \Phi(\mu, \nu)$.
\end{example}

\begin{example}
    Given Bernoulli distributions $\mu = (\delta_1 + \delta_2)/2$
    and $\nu = (\delta_0 + \delta_1)/2$, we have  $(\mu', \nu') \in \Phi(\mu,
    \nu)$ when $\mu' = \nu' = \delta_1 / 2$.
\end{example}

We call an ordered component pair $(\mu', \nu') \in 
\Phi(\mu, \nu)$ a \emph{maximal ordered component pair} if
it has greater mass than all others; that is, if
\begin{equation*}
    \mu''(S) \leq \mu'(S) 
    \quad \text{for all } \;
    (\mu'', \nu'') \in \Phi(\mu, \nu).
\end{equation*}
(We can restate this by replacing $\mu'(S)$ and $\mu''(S)$ with $\nu'(S)$ and
$\nu''(S)$ respectively, since
the mass of ordered component pairs is equal by the definition of stochastic
dominance.)  We let $\Phi^*(\mu, \nu)$ denote the set of maximal ordered
component pairs for $(\mu, \nu)$.  Thus, if
\begin{equation}
    \label{eq:dsig}
    \alpha_O(\mu, \nu) 
    := \sup \setntn{ \mu'(S) }{ (\mu', \nu') \in \Phi(\mu, \nu) }.
\end{equation}
then
\begin{equation*}
    \Phi^*(\mu, \nu) 
    = \setntn{(\mu', \nu') \in \Phi(\mu, \nu)}{\mu'(S) = \alpha_O(\mu, \nu)}.
\end{equation*}

Using the Polish space assumption, one can show that maximal ordered component
pairs always exist:

\begin{proposition}
    \label{p:em}
    The set $\Phi^*(\mu, \nu)$ is nonempty for all  $(\mu, \nu) \in \mM \times
    \mM$.
\end{proposition}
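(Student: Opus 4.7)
The plan is to extract a convergent maximizing sequence and identify its weak limit as an element of $\Phi^*(\mu,\nu)$. I would begin by selecting $(\mu_n',\nu_n') \in \Phi(\mu,\nu)$ with $\mu_n'(S) \to \alpha_O(\mu,\nu)$. Since $\mu \in \mM$ is a finite Borel measure on a Polish space, Ulam's theorem gives tightness of $\mu$, and the bound $\mu_n' \leq \mu$ immediately transfers tightness to $\{\mu_n'\}$; the same reasoning applies to $\{\nu_n'\}$. Both sequences have uniformly bounded total mass, so Prokhorov's theorem yields a common subsequence along which $\mu_n' \toweak \mu^*$ and $\nu_n' \toweak \nu^*$ for some $\mu^*, \nu^* \in \mM$.

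Next I would verify that the limit lies in $\Phi(\mu,\nu)$. For the domination $\mu^* \leq \mu$, the Portmanteau theorem gives $\mu^*(G) \leq \liminf_n \mu_n'(G) \leq \mu(G)$ for every open $G$, and outer regularity of finite Borel measures on a Polish space extends this to arbitrary $B \in \bB$ via
\[
\mu^*(B) \;=\; \inf\{\mu^*(G): G \supseteq B,\ G \text{ open}\} \;\leq\; \inf\{\mu(G): G \supseteq B,\ G \text{ open}\} \;=\; \mu(B).
\]
The same argument yields $\nu^* \leq \nu$. Mass equality $\mu^*(S) = \nu^*(S)$ is immediate, because weak convergence evaluated at the constant function $1$ gives $\mu^*(S) = \lim_n \mu_n'(S) = \lim_n \nu_n'(S) = \nu^*(S)$, using that $\mu_n' \preqsd \nu_n'$ forces $\mu_n'(S) = \nu_n'(S)$.

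The main obstacle will be transferring stochastic dominance through the weak limit, since the increasing Borel sets $I \in i\bB$ appearing in the definition of $\preqsd$ need not be continuity sets of $\mu^*$ or $\nu^*$. The device I would use here is the Kamae--Krengel--O'Brien characterization already invoked in Remark~\ref{r:sfe}: on a partially ordered Polish space with closed order, $\mu^* \preqsd \nu^*$ (given equal total mass) is equivalent to $\mu^*(h) \leq \nu^*(h)$ for every bounded continuous increasing $h$. For any such $h$, the weak convergence of both sequences and the hypothesis $\mu_n'(h) \leq \nu_n'(h)$ yield $\mu^*(h) = \lim_n \mu_n'(h) \leq \lim_n \nu_n'(h) = \nu^*(h)$, hence $\mu^* \preqsd \nu^*$. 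Combining this with the previous paragraph places $(\mu^*,\nu^*)$ in $\Phi(\mu,\nu)$, and since $\mu^*(S) = \lim_n \mu_n'(S) = \alpha_O(\mu,\nu)$, it in fact lies in $\Phi^*(\mu,\nu)$, so the latter is nonempty.
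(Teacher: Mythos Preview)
Your proof is correct and follows the same overall strategy as the paper's: take a maximizing sequence in $\Phi(\mu,\nu)$, extract weakly convergent subsequences via Prokhorov, and verify that the limit pair lies in $\Phi^*(\mu,\nu)$. The difference lies in the verification tools. The paper appeals to a single result (theorem~1.5.5 of Hern\'andez-Lerma and Lasserre) asserting that weak convergence of a sequence dominated by a fixed finite measure upgrades to setwise convergence; this immediately yields both $\mu^* \leq \mu$ and $\mu^*(I) \leq \nu^*(I)$ for every $I \in i\bB$ in one stroke. You instead obtain $\mu^* \leq \mu$ from Portmanteau on open sets plus outer regularity, and you obtain $\mu^* \preqsd \nu^*$ by passing to the continuous-increasing-function characterization of stochastic dominance (which the paper also uses elsewhere, citing \cite{kamae1977stochastic}). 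Your route is slightly longer but more self-contained, avoiding a somewhat specialized external reference; the paper's route is shorter but leans on that reference. Both are sound.
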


\begin{proof}
    Fix $(\mu, \nu) \in \mM \times \mM$ and let $s := \alpha_O(\mu, \nu)$.
    From the definition, we can take sequences
    $\{\mu'_n\}$ and $\{\nu'_n\}$ in $\mM$ such that $(\mu'_n, \nu'_n) \in
    \Phi(\mu, \nu)$ for all $n \in \NN$ and $\mu'_n(S) \uparrow s$.  
    Since $\mu'_n \leq \mu$ and $\nu'_n \leq \nu$ for all $n
    \in \NN$, Prohorov's theorem \cite[theorem~11.5.4]{dudley2002real} implies
    that these sequences have convergent subsequences with $\mu'_{n_k} \toweak \mu'$
    and $\nu'_{n_k} \toweak \nu'$ for some
    $\mu', \nu' \in \mM$.  We claim that $(\mu', \nu')$ is a maximal ordered
    component pair.

    Since $\mu'_{n_k} \toweak \mu'$ and $\mu'_n \leq \mu$ for all $n \in \NN$, 
    theorem 1.5.5 of \cite{lerma2003markov} implies that 
    for any Borel set $B$, we have $\mu'_{n_k}(B) \to \mu'(B)$ in $\RR$ .  
    Hence $\mu'(B) \leq \mu(B)$ and, in particular, $\mu' \leq \mu$.
    An analogous argument gives $\nu' \leq \nu$.
    Moreover, the definition of $\Phi(\mu, \nu)$ and stochastic dominance
    imply that $\mu'_n(S) = \nu'_n(S)$ for all $n \in \NN$, and therefore
    $\mu'(S) = \nu'(S)$.   Also, for any $I \in i\bB$, the fact that
    $\mu'_n(I) \leq \nu'_n(I)$ for all $n \in \NN$ gives us $\mu'(I) \leq
    \nu'(I)$.  Thus, $\mu' \preqsd \nu'$.  Finally, $\mu'(S) = s$, since
    $\mu'_n(S) \uparrow s$.  Hence $(\mu', \nu')$ lies in $\Phi^*(\mu, \nu)$.
\end{proof}

The value $\alpha_O(\mu, \nu)$ defined in \eqref{eq:dsig} gives the mass of
the maximal ordered component pair.  We call it the \emph{ordered affinity} from $\mu$ to $\nu$.
On an intuitive level, we can think of $\alpha_O(\mu, \nu)$ as the ``degree'' to
which $\mu$ is dominated by $\nu$ in the sense of stochastic dominance.
Since $(\mu \wedge \nu, \mu \wedge \nu)$ is an ordered component pair
for $(\mu, \nu)$, we have
\begin{equation}
    \label{eq:aldbal}
    0 \leq \alpha(\mu, \nu) \leq \alpha_O(\mu, \nu),
\end{equation}
where $\alpha(\mu, \nu)$ is the standard affinity
defined in section~\ref{s:prel}.  In fact $\alpha_O(\mu, \nu)$ generalizes the
standard the notion of affinity by extending it to arbitrary partial orders,
as shown in the next lemma.

\begin{lemma}
    If $\preceq$ is the identity order, then $\alpha_O = \alpha$ on $\mM \times \mM$.
\end{lemma}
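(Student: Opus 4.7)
The plan is to unpack the definition of $\Phi(\mu,\nu)$ under the identity order and then invoke the defining property of the measure-theoretic infimum $\mu \wedge \nu$. The paper has already noted that under the identity order, $\mu' \preqsd \nu'$ holds if and only if $\mu' = \nu'$. Using this, I would first observe the characterization
\begin{equation*}
    \Phi(\mu,\nu)
    = \setntn{ (\rho, \rho) }{ \rho \in \mM,\; \rho \leq \mu,\; \rho \leq \nu },
\end{equation*}
so that
\begin{equation*}
    \alpha_O(\mu,\nu)
    = \sup \setntn{ \rho(S) }{ \rho \in \mM,\; \rho \leq \mu,\; \rho \leq \nu }.
\end{equation*}

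Next I would show the two inequalities. For $\alpha(\mu,\nu) \leq \alpha_O(\mu,\nu)$, it suffices to exhibit $\mu\wedge\nu$ as an admissible $\rho$: since $\mu \wedge \nu$ is dominated by both $\mu$ and $\nu$, the pair $(\mu\wedge\nu,\mu\wedge\nu)$ lies in $\Phi(\mu,\nu)$, giving $\alpha(\mu,\nu) = (\mu\wedge\nu)(S) \leq \alpha_O(\mu,\nu)$. (This is also recorded as inequality (2.10) in the excerpt.) For the reverse inequality, I would appeal to the definition of $\mu \wedge \nu$ as the \emph{largest} element of $\mM$ dominated by both $\mu$ and $\nu$: any $\rho$ appearing in the supremum must satisfy $\rho \leq \mu \wedge \nu$, hence $\rho(S) \leq (\mu \wedge \nu)(S) = \alpha(\mu,\nu)$. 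Taking the supremum over such $\rho$ yields $\alpha_O(\mu,\nu) \leq \alpha(\mu,\nu)$.

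There is essentially no obstacle here; the only substantive input is the identification of stochastic dominance with equality under the identity order (already stated in the paper), together with the standard characterization of $\mu \wedge \nu$ as the maximal common minorant in $\mM$. The result then falls out by two lines of set inclusion and one application of the definition of infimum.
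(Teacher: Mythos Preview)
Your proposal is correct and follows essentially the same approach as the paper: both arguments reduce $\preqsd$ to equality under the identity order, so that $\Phi(\mu,\nu)$ becomes the set of diagonal pairs $(\rho,\rho)$ with $\rho \leq \mu$ and $\rho \leq \nu$, and then conclude that the supremum defining $\alpha_O$ is attained by $\mu \wedge \nu$. The paper states this last step in one line, while you spell out the two inequalities explicitly via the maximal-minorant property of $\mu \wedge \nu$; no substantive difference.
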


\begin{proof}
    Fix $(\mu, \nu) \in \mM \times \mM$ and let $\preceq$ be the identity
    order ($x \preceq y$ iff $x = y$). Then
     $\preqsd$ also corresponds to equality, from which it
    follows that the supremum in \eqref{eq:dsig} is attained by $\mu \wedge
    \nu$.  Hence $\alpha_O(\mu, \nu)  = \alpha(\mu, \nu)$.  
\end{proof}

\subsection{Properties of Ordered Affinity}

Let's list some elementary properties of $\alpha_O$.
The following list should be compared with lemma~\ref{l:poaf}.  It shows that
analogous results hold for $\alpha_O$ as hold for $\alpha$.
(Lemma~\ref{l:poaf} is in fact a special case of lemma~\ref{l:poaf2} with the partial
order set to the identity order.)

\begin{lemma}
    \label{l:poaf2}
    For all $(\mu, \nu) \in \mM \times \mM$, we have
    \begin{enumerate}
        \item $0 \leq \alpha_O(\mu, \nu) \leq \min\{\mu (S), \nu(S) \}$,
        \item $\alpha_O(\mu, \nu) = \mu(S) = \nu(S)$ if and only if $\mu \preqsd \nu$,
            and
        \item $c \alpha_O(\mu, \nu) = \alpha_O(c\mu, c\nu)$ whenever $c \geq 0$.
    \end{enumerate}
\end{lemma}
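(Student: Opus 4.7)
The plan is to handle the three parts separately, leveraging the definition of $\alpha_O$ as a supremum over $\Phi(\mu,\nu)$ together with Proposition~\ref{p:em} for the delicate part of (b).

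For (a), the lower bound $0 \leq \alpha_O(\mu,\nu)$ is immediate because $(\mathbf{0},\mathbf{0}) \in \Phi(\mu,\nu)$ (alternatively, this follows from \eqref{eq:aldbal}). For the upper bound, I would take any $(\mu',\nu') \in \Phi(\mu,\nu)$ and observe that $\mu' \leq \mu$ gives $\mu'(S) \leq \mu(S)$, while the defining condition $\mu' \preqsd \nu'$ together with $\nu' \leq \nu$ yields $\mu'(S) = \nu'(S) \leq \nu(S)$. Hence $\mu'(S) \leq \min\{\mu(S),\nu(S)\}$, and taking the supremum proves the claim.

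For (b), the forward direction is straightforward: if $\mu \preqsd \nu$, then $\mu(S)=\nu(S)$ and $(\mu,\nu) \in \Phi(\mu,\nu)$, hence $\alpha_O(\mu,\nu) \geq \mu(S)$; combined with (a) we obtain $\alpha_O(\mu,\nu) = \mu(S) = \nu(S)$. The converse is the one nontrivial step. Assuming $\alpha_O(\mu,\nu) = \mu(S) = \nu(S)$, invoke Proposition~\ref{p:em} to obtain $(\mu',\nu') \in \Phi^*(\mu,\nu)$ with $\mu'(S) = \alpha_O(\mu,\nu) = \mu(S)$. Since $\mu' \leq \mu$ and, for any $B \in \bB$, $\mu'(B) + \mu'(B^c) = \mu'(S) = \mu(S) = \mu(B) + \mu(B^c)$ with $\mu'(B) \leq \mu(B)$ and $\mu'(B^c) \leq \mu(B^c)$, we conclude $\mu'(B) = \mu(B)$, so $\mu' = \mu$. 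The identical argument gives $\nu' = \nu$, and then $\mu \preqsd \nu$ follows from $\mu' \preqsd \nu'$.

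For (c), the case $c=0$ gives $0=0$ on both sides. For $c>0$, the map $(\mu',\nu') \mapsto (c\mu', c\nu')$ is a bijection between $\Phi(\mu,\nu)$ and $\Phi(c\mu,c\nu)$, because each of the constraints $\mu' \leq \mu$, $\nu' \leq \nu$ and the stochastic dominance $\mu'(I) \leq \nu'(I)$ on increasing $I$ together with $\mu'(S)=\nu'(S)$ are preserved under multiplication by $c>0$. Taking the supremum of $\mu'(S)$ over $\Phi(\mu,\nu)$ and multiplying by $c$ then yields $c\alpha_O(\mu,\nu) = \alpha_O(c\mu,c\nu)$.

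The only step that is not purely mechanical is the converse of (b), which rests essentially on Proposition~\ref{p:em}: without the existence of a maximiser, one would only know that $\mu$ and $\nu$ can be approximated arbitrarily well by comparable submeasures, which is not obviously enough to conclude $\mu \preqsd \nu$. Once the maximiser is in hand, the remaining arguments reduce to elementary facts about finite measures.
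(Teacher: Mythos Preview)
Your proof is correct and follows essentially the same approach as the paper: both rely on Proposition~\ref{p:em} for the converse of (b), and the only minor differences are that the paper, rather than concluding $\nu'=\nu$, simply chains $\mu(I)\leq\nu'(I)\leq\nu(I)$, and for (c) it proves the two inequalities separately (using the trick of applying the one-sided bound with $1/c$) instead of invoking the bijection $\Phi(\mu,\nu)\leftrightarrow\Phi(c\mu,c\nu)$ directly. As a side remark, your claim that Proposition~\ref{p:em} is essential is slightly overstated: an $\epsilon$-approximation argument would also yield the converse of (b), though the maximiser certainly makes it cleaner.
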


\begin{proof}
    Fix $(\mu, \nu) \in \mM \times \mM$.  Claim (a) follows directly
    from the definitions.  Regarding claim (b),  suppose first that $\mu
    \preqsd \nu$.  Then $(\mu, \nu) \in \Phi(\mu, \nu)$ and hence
    $\alpha_O(\mu, \nu) = \mu(S)$.  Conversely, if $\alpha_O(\mu, \nu) =
    \mu(S)$, then, since the only component $\mu' \leq \mu$ with $\mu'(S) =
    \mu(S)$ is $\mu$ itself, we must have $(\mu, \nu') \in \Phi(\mu, \nu')$
    for some $\nu' \leq \nu$ with $\mu \preqsd \nu'$.  But then $\mu(I) \leq
    \nu'(I) \leq \nu(I)$ for any $I \in i\bB$. Hence $\mu \preqsd \nu$.

    Claim (c) is trivial if $c = 0$, so suppose instead that $c > 0$.
    Fix $(\mu', \nu') \in \Phi(\mu, \nu)$ such that $\alpha_O(\mu, \nu) =
    \mu'(S)$.  It is clear that $(c\mu', c\nu') \in \Phi(c\mu, c\nu)$,
    implying that
    \begin{equation}
        \label{eq:cin}
        c \alpha_O(\mu, \nu) = c \mu'(S) \leq \alpha_O(c\mu, c\nu).
    \end{equation}
    For reverse inequality, we can apply \eqref{eq:cin} again to get
    \begin{equation*}
        \alpha_O(c\mu, c\nu) = c (1/c)  \alpha_O(c\mu, c\nu) \leq c
        \alpha_O(\mu, \nu).
        \qedhere
    \end{equation*}
\end{proof}

\subsection{Equivalent Representations}

In \eqref{eq:affc} we noted that the affinity between two measures
has several alternative representations.  In our setting these results
generalize as follows:

\begin{theorem}
    \label{t:affido}
    For all $(\mu, \nu) \in \pP \times \pP$, we have
    \begin{equation}
        \label{eq:affco}
        \alpha_O(\mu, \nu) 
        = 1 - \sup_{I \in i\bB} \{ \mu(I) - \nu(I) \}
        = \max_{(X, Y) \in \ccC(\mu, \nu)} \PP\{ X \preceq Y \}.
    \end{equation}
\end{theorem}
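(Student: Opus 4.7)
The plan is to sandwich $\alpha_O(\mu, \nu)$ between the other two
quantities. Let $\kappa := \sup_{I \in i\bB} \{\mu(I) - \nu(I)\}$ and
$c := \sup_{(X, Y) \in \ccC(\mu, \nu)} \PP\{X \preceq Y\}$. The chain
to establish is $1 - \kappa \leq c \leq \alpha_O(\mu, \nu) \leq 1 -
\kappa$, of which the right two inequalities are direct consequences of
the definitions, while the leftmost one requires Strassen's theorem.
Attainment of the maximum in the coupling representation would follow
separately: $\ccC(\mu, \nu)$ is weakly compact (its marginals being
fixed and tight) and $\pi \mapsto \pi(\GG)$ is upper semicontinuous
because $\GG$ is closed, so a maximising sequence has a weak limit that
achieves $c$.

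For $\alpha_O(\mu, \nu) \leq 1 - \kappa$, I would fix $(\mu', \nu') \in
\Phi(\mu, \nu)$ and $I \in i\bB$ and decompose
\[
\mu(I) - \nu(I) = (\mu - \mu')(I) + (\mu'(I) - \nu'(I)) + (\nu' - \nu)(I).
\]
The middle term is nonpositive by $\mu' \preqsd \nu'$, the third by
$\nu' \leq \nu$, and the first is at most $(\mu - \mu')(S) = 1 -
\mu'(S)$. Taking sup over $I$ and then over ordered component pairs
yields $\kappa \leq 1 - \alpha_O(\mu, \nu)$. For $c \leq \alpha_O(\mu,
\nu)$, given any $(X, Y) \in \ccC(\mu, \nu)$, I would define $\mu'(B) :=
\PP\{X \in B, X \preceq Y\}$ and $\nu'(B) := \PP\{Y \in B, X \preceq
Y\}$. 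These are sub-measures of $\mu$ and $\nu$ with common mass
$\PP\{X \preceq Y\}$, and for $I \in i\bB$ the inclusion $\{X \in I, X
\preceq Y\} \subseteq \{Y \in I, X \preceq Y\}$ (immediate from $I$
being increasing) gives $\mu'(I) \leq \nu'(I)$. Hence $(\mu', \nu')
\in \Phi(\mu, \nu)$ and $\PP\{X \preceq Y\} = \mu'(S) \leq
\alpha_O(\mu, \nu)$.

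The main work is $c \geq 1 - \kappa$. Here I would invoke the
closed-set form of Strassen's theorem (Kellerer's extension), which
gives
\[
c \;=\; \inf\setntn{\mu(A) + \nu(B)}{A, B \in \bB,\; \GG \subseteq (A
\times S) \cup (S \times B)},
\]
and then show the right-hand side is at least $1 - \kappa$. The cover
condition forces $B$ to contain $i(A^c) := \{y \in S : x \preceq y
\text{ for some } x \in A^c\}$, and in particular $A^c \subseteq B$,
whence $\mu(A) + \nu(B) \geq 1 - \mu(B) + \nu(B)$. If $B$ could be
replaced by some $I \in i\bB$ containing $i(A^c)$, the bound $\mu(B) -
\nu(B) \leq \kappa$ would follow from the definition of $\kappa$. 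The
principal obstacle is that $i(A^c)$ is in general only analytic, not
Borel. I would address this using the Radon property of $\mu$ and
$\nu$ on the Polish space $S$, together with the closedness of $\GG$,
to outer-approximate $i(A^c)$ by sets $I_n \in i\bB$ so that the
inequality passes to the limit; closedness of the partial order is
exactly what makes such approximation within $i\bB$ available.
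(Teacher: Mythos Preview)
Your sandwich $1-\kappa \leq c \leq \alpha_O(\mu,\nu) \leq 1-\kappa$ is a clean organization, and the two rightmost inequalities are handled correctly. Your argument that $c \leq \alpha_O(\mu,\nu)$ is essentially identical to the paper's. Your proof that $\alpha_O(\mu,\nu) \leq 1-\kappa$ via the three-term decomposition of $\mu(I)-\nu(I)$ is more direct than the paper's route (which passes through the coupling representation first), and your compactness-plus-semicontinuity argument for attainment of the maximum is a legitimate alternative to the paper's explicit construction of an order-maximal coupling from a maximal ordered component pair and the Nachbin--Strassen theorem.

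The genuine gap is in the step $c \geq 1-\kappa$. You invoke the Kellerer dual form
\[
c = \inf\setntn{\mu(A)+\nu(B)}{A,B\in\bB,\; \GG \subseteq (A\times S)\cup(S\times B)},
\]
observe that the cover condition forces $i(A^c)\subseteq B$, and then want to replace $B$ by the increasing set $i(A^c)$. As you note, $i(A^c)$ is only analytic in general, and your proposed fix---outer-approximating $i(A^c)$ by sets in $i\bB$ using ``the Radon property \ldots\ together with the closedness of $\GG$''---does not actually name a mechanism. Closedness of $\GG$ gives you inner regularity and closed down-sets of compact sets, but it does not obviously yield outer approximation of an arbitrary analytic increasing set by \emph{increasing} Borel sets; that claim needs a real argument, and none is supplied.

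The paper sidesteps this measurability issue entirely by using Strassen's theorem in a different form: for a closed set $K\subseteq S\times S$ and $\epsilon\geq 0$, a coupling $\xi$ with $\xi(K)\geq 1-\epsilon$ exists provided
\[
\nu(F) \leq \mu\bigl(\pi_1(K\cap(S\times F))\bigr) + \epsilon \qquad \text{for all closed } F\subseteq S.
\]
Taking $K=\GG$ and $\epsilon=\sup_{D\in d\cC}\{\nu(D)-\mu(D)\}$, one recognizes $\pi_1(\GG\cap(S\times F))=d(F)$, and the hypothesis is immediate from the definition of $\epsilon$. The point is that only \emph{closed} test sets $F$ enter, so one works with $d(F)$ for closed $F$ rather than with $i(A^c)$ for Borel $A$; no analytic-set approximation is required. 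If you want to salvage your approach, the cleanest repair is to switch to this version of Strassen rather than to attempt the approximation.
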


Evidently \eqref{eq:affc} is a special case of
\eqref{eq:affco} because \eqref{eq:affco}
reduces to \eqref{eq:affc} when $\preceq$ is set to equality.  
For example, when $\preceq$ is equality,
\begin{equation*}
    \sup_{I \in i\bB} \{ \mu(I) - \nu(I) \}
    = \sup_{B \in \bB} \{ \mu(B) - \nu(B) \}
    = \sup_{B \in \bB} | \mu(B) - \nu(B) |.
\end{equation*}
where the last step is from \eqref{eq:sisf}.  Note also that, as shown in
the proof of theorem~\ref{t:affido}, the supremum can also be written in
terms of the open increasing sets $i\oO$ or the closed decreasing sets $d\cC$.
In particular,
\begin{equation*}
    \sup_{I \in i\bB} \{ \mu(I) - \nu(I) \}
    = \sup_{I \in i\oO} \{ \mu(I) - \nu(I) \}
    = \sup_{D \in d\cC} \{ \nu(D) - \mu(D) \}.
\end{equation*}

One of the assertions of theorem~\ref{t:affido} is 
the existence of a coupling $(X, Y) \in \ccC(\mu, \nu)$ attaining
$\PP\{ X \preceq Y \} = \alpha_O(\mu, \nu)$.
Let us refer to any such coupling as an \emph{order maximal} coupling for
$(\mu, \nu)$.

\begin{example}
    For $(x, y) \in S \times S$, we have
    \begin{equation*}
        \alpha_O(\delta_x, \delta_y) = \1\{x \preceq y\} = \1_{\GG}(x, y),
    \end{equation*}
    as can easily be verified from the definition or either of the alternative
    representations in \eqref{eq:affco}.  The map $(x, y) \mapsto \1_{\GG}(x,
    y)$ is measurable due to the Polish assumption.  As a result, for any $(X,
    Y) \in \ccC(\mu, \nu)$ we have 
    \begin{equation*}
        \EE \alpha_O(\delta_X, \delta_Y) 
         = \PP\{X \preceq Y\}
        \leq \alpha_O(\mu, \nu),
    \end{equation*}
    with equality when $(X, Y)$ is an order maximal coupling.
\end{example}

\subsection{Comments on Theorem~\ref{t:affido}}

The existence of an order maximal coupling 
shown in theorem~\ref{t:affido}
implies two well-known results that are usually treated separately.
One is the Nachbin--Strassen theorem
(see, e.g., thm.~1 of \cite{kamae1977stochastic} or ch.~IV of \cite{lindvall2002lectures}), which 
states the existence of a coupling $(X, Y) \in \ccC(\mu, \nu)$ 
attaining $\PP\{ X \preceq Y \} = 1$ whenever $\mu \preqsd \nu$.
The existence of an order maximal coupling for each $(\mu, \nu)$ in $\pP
\times \pP$ implies
this statement, since, under the hypothesis that $\mu \preqsd \nu$, we also have
$\alpha_O(\mu, \nu)= 1$. Hence any order maximal coupling satisfies $\PP\{ X
\preceq Y \} = 1$.

The other familiar result implied by the existence of an order maximal coupling 
is the existence of a maximal coupling in the standard sense (see the discussion
of maximal couplings after \eqref{eq:affc} and the result on p.~19
of \cite{lindvall2002lectures}).  Indeed, if we take $\preceq$ to be the
identity order, then \eqref{eq:affco} reduces to \eqref{eq:affc}, as already discussed.

\section{Total Ordered Variation}

\label{s:tov}

Let $S$ be a partially ordered Polish space.  Consider the function on $\pP \times \pP$ given by
\begin{equation}
    \label{eq:dtovd}
    \gamma(\mu, \nu) 
    := 2  - \alpha_O(\mu, \nu) - \alpha_O(\nu, \mu).
\end{equation}
We call $\gamma(\mu, \nu)$ the \emph{total ordered variation distance} between
$\mu$ and $\nu$.  The natural comparison is with \eqref{eq:tvmtap},
which renders the same value if $\alpha_O$ is replaced by $\alpha$.
In particular, when $\preceq$ is equality, ordered affinity reduces to
affinity, and total ordered variation distance reduces to total variation
distance.
Since ordered affinities dominate affinities (see \eqref{eq:aldbal}), we have
   $\gamma(\mu, \nu) \leq  \| \mu - \nu \|$
   for all $(\mu, \nu) \in \pP \times \pP$.

Other, equivalent, representations of $\gamma$ are available.  For example,
in view of \eqref{eq:affco}, for any $(\mu, \nu) \in \pP \times \pP$ we have
\begin{equation}
    \label{eq:tvsio}
    \gamma(\mu, \nu) 
    = \sup_{I \in i\bB} (\mu - \nu)(I) + \sup_{I \in i\bB} (\nu - \mu)(I),
\end{equation}
By combining lemma~\ref{l:cfs} and \eqref{eq:chho}, we also have
\begin{equation}
    \label{eq:nq}
    2 \gamma(\mu, \nu) 
    = \sup_{h \in iH} (\mu - \nu)(h) + \sup_{h \in iH} (\nu - \mu)(h).
\end{equation}
It is straightforward to show that 
\begin{equation}
    \label{eq:idb}
    \sup_{I \in i\bB} |\mu(I) - \nu(I)| \leq \gamma(\mu, \nu) 
    \quad \text{and} \quad
    \sup_{D \in d\bB} |\mu(D) - \nu(D)| \leq \gamma(\mu, \nu) .
\end{equation}

\begin{lemma}
    The function $\gamma$ is a metric on $\pP$.
\end{lemma}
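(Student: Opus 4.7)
The plan is to verify the four metric axioms in turn, relying on the identities and inequalities established earlier in the section, especially Lemma~\ref{l:poaf2}, Remark~\ref{r:sfe}, and the representation \eqref{eq:tvsio}.

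Non-negativity is immediate: for $(\mu,\nu) \in \pP \times \pP$ we have $\mu(S) = \nu(S) = 1$, so Lemma~\ref{l:poaf2}(a) gives $\alpha_O(\mu,\nu) \leq 1$ and $\alpha_O(\nu,\mu) \leq 1$, whence $\gamma(\mu,\nu) = 2 - \alpha_O(\mu,\nu) - \alpha_O(\nu,\mu) \geq 0$. Symmetry $\gamma(\mu,\nu) = \gamma(\nu,\mu)$ is obvious from the definition \eqref{eq:dtovd}.

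For the identity of indiscernibles, note that $\gamma(\mu,\nu) = 0$ forces $\alpha_O(\mu,\nu) = \alpha_O(\nu,\mu) = 1 = \mu(S) = \nu(S)$. By Lemma~\ref{l:poaf2}(b) this is equivalent to $\mu \preqsd \nu$ and $\nu \preqsd \mu$, which by Remark~\ref{r:sfe} forces $\mu = \nu$. Conversely, if $\mu = \nu$ then trivially $\mu \preqsd \nu$ and $\nu \preqsd \mu$, so both ordered affinities equal $1$ and $\gamma(\mu,\nu)=0$.

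The only nontrivial step is the triangle inequality, and I would derive it from the set-function representation \eqref{eq:tvsio} rather than from the definition. For any $\lambda \in \pP$ and any $I \in i\bB$ we can write $(\mu-\nu)(I) = (\mu-\lambda)(I) + (\lambda-\nu)(I)$, so
\begin{equation*}
    (\mu-\nu)(I) \leq \sup_{J \in i\bB}(\mu-\lambda)(J) + \sup_{J \in i\bB}(\lambda-\nu)(J).
\end{equation*}
Taking the supremum over $I \in i\bB$ on the left gives $\sup_I (\mu-\nu)(I) \leq \sup_J(\mu-\lambda)(J) + \sup_J(\lambda-\nu)(J)$, and the symmetric argument with the roles of $\mu$ and $\nu$ swapped yields $\sup_I(\nu-\mu)(I) \leq \sup_J(\nu-\lambda)(J) + \sup_J(\lambda-\mu)(J)$. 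Adding these two inequalities and invoking \eqref{eq:tvsio} three times gives $\gamma(\mu,\nu) \leq \gamma(\mu,\lambda) + \gamma(\lambda,\nu)$.

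I expect the triangle inequality to be the only step requiring care, and using \eqref{eq:tvsio} (rather than the coupling representation or the definition via $\Phi$) reduces it to a routine manipulation of suprema of linear functionals on $i\bB$; no appeal to properties of the maximal component pair beyond \eqref{eq:tvsio} itself is needed.
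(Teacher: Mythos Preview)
Your proof is correct and follows essentially the same approach as the paper: the paper's own argument is a two-line sketch invoking representation \eqref{eq:tvsio} for the triangle inequality and \eqref{eq:tvsio} together with Remark~\ref{r:sfe} for the identity of indiscernibles, which is exactly what you do (your use of Lemma~\ref{l:poaf2}(b) in place of \eqref{eq:tvsio} for the latter step is an equivalent rephrasing). You have simply written out the details the paper leaves implicit.
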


\begin{proof}
    The claim that $\gamma$ is a metric follows in a straightforward way from the
    definition or the alternative representation \eqref{eq:tvsio}.  For
    example, the triangle inequality is easy to verify using \eqref{eq:tvsio}.
    Also, $\gamma(\mu, \nu) = 0$ implies $\mu = \nu$ by \eqref{eq:tvsio} and
    remark~\ref{r:sfe}.
\end{proof}

\subsection{Connection to Other Modes of Convergence}

As well as total variation, the metric $\gamma$ is closely related to the
so-called \emph{Bhattacharya metric}, which is given by
\begin{equation}
    \label{eq:bhm}
    \beta(\mu, \nu) := \sup_{h \in iH} |\mu(h) - \nu(h)|    .
\end{equation}
See \cite{bhattacharya1988asymptotics, chakraborty1998completeness}.  
(In \cite{chakraborty1998completeness} the metric is defined by taking the
supremum over $iH_0$ rather than $iH$, but the two definitions differ only by
a positive scalar.)   The Bhattacharya metric can be thought of as an
alternative way to generalize total variation distance, in the sense that,
like $\gamma$, the metric $\beta$ reduces to total variation distance when
$\preceq$ is the identity order (since $iH$ equals $H$ under this order).  
From \eqref{eq:cfs2} we have
\begin{equation}
    \label{eq:cfsi}
    \frac{1}{2}
    \left[ 
        \sup_{h \in iH} \lambda(h) +\sup_{h \in iH} (-\lambda)(h)
    \right]
    \leq \sup_{h \in iH} |\lambda(h)|
    \leq \sup_{h \in iH} \lambda(h) + \sup_{h \in iH} (-\lambda)(h),
\end{equation}
and from this and \eqref{eq:nq} we have
\begin{equation}
    \label{eq:em}
    \gamma(\mu, \nu)
    \leq \beta(\mu, \nu) 
    \leq 2 \gamma(\mu, \nu).
\end{equation}
Hence $\beta$ and $\gamma$ are equivalent metrics.

The metric $\gamma$ is also connected to the Wasserstein metric
\cite{gibbs2002choosing, givens1984class}.
If $\rho$ metrizes the topology on $S$, then the Wasserstein distance between
probability measures $\mu$ and $\nu$ is
\begin{equation*}
    w(\mu, \nu) 
    := \inf_{(X, Y) \in \ccC(\mu, \nu)} \EE \, \rho(X, Y) .
\end{equation*}
The total ordered variation metric can be compared as follows. Consider the
''directed semimetric'' $\hat \rho(x, y) := \1\{x \npreceq y\}$.  In view of
\eqref{eq:affco} we have
\begin{equation*}
    \gamma(\mu, \nu)  
    = \inf_{(X, Y) \in \ccC(\mu, \nu)} \EE \, \hat \rho(X, Y) 
        + \inf_{(X, Y) \in \ccC(\mu, \nu)} \EE \, \hat \rho(Y, X) .
\end{equation*}
Thus, $\gamma(\mu, \nu)$ is found by summing
two partial, ``directed Wasserstein deviations.''
Summing the two directed differences from opposite directions yields
a metric.

\begin{proposition}
    \label{p:iwc}
    If $\{\mu_n\}_{n \geq 0} \subset \pP$ is tight and $\gamma(\mu_n, \mu_0)
    \to 0$, then $\mu_n \toweak \mu_0$.
\end{proposition}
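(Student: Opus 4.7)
The plan is to combine the relative compactness coming from tightness with the observation that $\gamma$-convergence already pins down the action of $\mu_n$ on continuous increasing bounded functions, then identify every subsequential weak limit with $\mu_0$.

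First, by Prohorov's theorem, tightness of $\{\mu_n\}$ makes the family relatively compact in the weak topology on $\pP$. Hence, to prove $\mu_n \toweak \mu_0$, it suffices to show that every subsequential weak limit equals $\mu_0$. Accordingly, given an arbitrary subsequence $\{\mu_{n_k}\}$, I extract a further subsequence $\{\mu_{n_{k_j}}\}$ with $\mu_{n_{k_j}} \toweak \mu^*$ for some $\mu^* \in \pP$, and reduce the problem to showing $\mu^* = \mu_0$.

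Next, I upgrade the $\gamma$-convergence hypothesis to pointwise convergence against members of $ibS$. By the right-hand inequality in \eqref{eq:em}, $\beta(\mu_n, \mu_0) \leq 2\gamma(\mu_n, \mu_0) \to 0$, so that $\sup_{h \in iH} |\mu_n(h) - \mu_0(h)| \to 0$. Rescaling each $h \in ibS$ by its uniform bound shows $\mu_n(h) \to \mu_0(h)$ for every $h \in ibS$. In particular, for every continuous $h \in ibS$, weak convergence along the subsubsequence yields
\begin{equation*}
    \mu^*(h) \;=\; \lim_j \mu_{n_{k_j}}(h) \;=\; \mu_0(h).
\end{equation*}
By remark~\ref{r:sfe}, a Borel probability measure on a partially ordered Polish space is determined by its integrals against continuous members of $ibS$, so $\mu^* = \mu_0$. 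Since every weak cluster point of $\{\mu_n\}$ coincides with $\mu_0$, the relatively compact sequence $\{\mu_n\}$ must converge weakly to $\mu_0$.

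The only delicate step is the invocation of remark~\ref{r:sfe}, which is where closedness of the partial order is actually used: without it, continuous increasing bounded functions need not form a separating family, and one could not identify $\mu^*$ with $\mu_0$ from agreement on this class. Everything else is a standard Prohorov-plus-subsequence argument, with the bound $\beta \leq 2\gamma$ doing the routine work of converting the $\gamma$-metric hypothesis into convergence against specific test functions.
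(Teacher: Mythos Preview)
Your proof is correct and follows essentially the same approach as the paper: use Prohorov's theorem to extract a weakly convergent subsubsequence from any subsequence, then identify the limit with $\mu_0$ via remark~\ref{r:sfe} by checking agreement on continuous $h \in ibS$. The only difference is that you spell out explicitly, via the bound $\beta \leq 2\gamma$, why $\gamma$-convergence forces $\mu_n(h) \to \mu_0(h)$ for such $h$, whereas the paper states this step without elaboration.
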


\begin{proof}
    Let $\{\mu_n\}$ and $\mu := \mu_0$ satisfy the conditions of the proposition.
    Take any subsequence of $\{\mu_n\}$ and observe that by
    Prohorov's theorem, this subsequence has a subsubsequence
    converging weakly to some $\nu \in \pP$.  
    Along this subsubsequence, 
    for any continuous $h \in ibS$ we have both
    $\mu_n(h ) \to \mu(h )$ and $\mu_n( h ) \to \nu( h )$. This is sufficient for $\nu = \mu$
    by remark~\ref{r:sfe}.
    Thus, every subsequence of $\{\mu_n\}$ has a subsubsequence
    converging weakly to $\mu$, and hence so does the entire sequence.
\end{proof}

\subsection{Completeness}

To obtain completeness of $(\pP, \gamma)$, we adopt the following additional
assumption, 
which is satisfied if, say,
compact sets are order bounded (i.e., lie in order intervals) and order
intervals are compact.  (For example, $\RR^n$ with the usual pointwise partial
order has this property.) 

\begin{assumption}
    \label{a:c}
    If $K \subset S$ is compact, then $i(K) \cap d(K)$ is also compact.
\end{assumption}

\begin{theorem}
    \label{t:bkt}
    If assumption~\ref{a:c} holds, then $(\pP, \gamma)$ is complete.
\end{theorem}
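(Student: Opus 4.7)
The plan is to adapt the classical three-step template for completeness of a weak-type metric: given a $\gamma$-Cauchy sequence $\{\mu_n\} \subset \pP$, first show it is uniformly tight, then extract a weakly convergent subsequence by Prohorov's theorem, and finally upgrade weak convergence along that subsequence to $\gamma$-convergence of the entire sequence, using the coupling representation of $\alpha_O$ furnished by theorem~\ref{t:affido}.

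For tightness, I would fix $\epsilon > 0$, choose $N$ with $\gamma(\mu_m, \mu_n) < \epsilon$ for all $m, n \geq N$, and use Polishness to pick a compact $K_0 \subset S$ with $\mu_N(K_0) > 1 - \epsilon$. The set $i(K_0)$ equals the projection of the closed set $\GG \cap (K_0 \times S)$ onto $S$ along the compact factor $K_0$, hence is closed; analogously $d(K_0)$ is closed. Both contain $K_0$, so applying \eqref{eq:idb} to $i(K_0) \in i\bB$ and $d(K_0) \in d\bB$ gives $\mu_n(i(K_0)) > 1 - 2\epsilon$ and $\mu_n(d(K_0)) > 1 - 2\epsilon$ for $n \geq N$, and hence $\mu_n(i(K_0) \cap d(K_0)) > 1 - 4\epsilon$ by inclusion--exclusion. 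Assumption~\ref{a:c} makes $i(K_0) \cap d(K_0)$ compact; absorbing a compact carrying the finitely many $\mu_1, \ldots, \mu_{N-1}$ produces a single compact valid for all $n$. Prohorov's theorem then supplies a subsequence with $\mu_{n_k} \toweak \mu$ for some $\mu \in \pP$.

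To show $\gamma(\mu_n, \mu) \to 0$, fix $\epsilon > 0$ and $N$ as above. For $n, n_k \geq N$, the identity \eqref{eq:dtovd} together with $\alpha_O \leq 1$ yields $\alpha_O(\mu_n, \mu_{n_k}) \geq 1 - \gamma(\mu_n, \mu_{n_k}) > 1 - \epsilon$. Theorem~\ref{t:affido} then provides an order maximal coupling $(X_k, Y_k) \in \ccC(\mu_n, \mu_{n_k})$ with $\PP\{X_k \preceq Y_k\} = \alpha_O(\mu_n, \mu_{n_k})$. The joint laws of $\{(X_k, Y_k)\}$ on $S \times S$ are uniformly tight because their two marginal families are, so by Prohorov on $S \times S$ some subsubsequence converges in distribution to a random element $(X, Y)$ whose marginals are $\mu_n$ and $\mu$. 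Since $\GG$ is closed, its indicator is upper semicontinuous, and Portmanteau gives $\PP\{X \preceq Y\} \geq \limsup_j \PP\{X_{k_j} \preceq Y_{k_j}\} \geq 1 - \epsilon$. Therefore $\alpha_O(\mu_n, \mu) \geq 1 - \epsilon$; the symmetric coupling argument yields $\alpha_O(\mu, \mu_n) \geq 1 - \epsilon$, and hence $\gamma(\mu_n, \mu) \leq 2\epsilon$ for every $n \geq N$.

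The main obstacle is the last step: weak convergence of $\mu_{n_k}$ to $\mu$ by itself does not force $\gamma$-convergence (with the identity order, $\gamma$ is total variation, which is strictly stronger than weak convergence). Two ingredients are indispensable here. The Cauchy hypothesis furnishes a uniform lower bound on $\alpha_O$ between tail members of the sequence, and the closedness of $\GG$ lets Portmanteau push that lower bound through to a coupling of $\mu_n$ with the limit $\mu$. A direct attack on the set-theoretic expression \eqref{eq:tvsio} appears to fail because the Portmanteau inequalities for open and closed sets go in opposite directions, whereas one term of $\gamma$ demands an upper bound on $\sup_{I \in i\oO}(\mu_n(I) - \mu(I))$.
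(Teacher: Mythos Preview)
Your proof is correct. The tightness argument and the appeal to Prohorov's theorem match the paper's proof essentially verbatim (your explicit verification that $i(K_0)$ and $d(K_0)$ are closed, via projection along the compact factor, is a detail the paper leaves implicit). The genuine difference lies in the final step, where you must pass the lower bound on $\alpha_O$ from pairs $(\mu_n,\mu_{n_k})$ to pairs $(\mu_n,\mu)$. The paper works directly with the \emph{definition} of $\alpha_O$: it extracts ordered component pairs $(\tilde\mu_{n_k},\tilde\nu_{n_k}) \in \Phi(\mu_{n_k},\mu_n)$ of mass near $1$, passes to weak limits of these sub-measures, and then uses a regularity lemma (lemma~\ref{cal}) together with preservation of $\preqsd$ under weak convergence to show the limit pair lies in $\Phi(\mu,\mu_n)$. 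You instead invoke the coupling representation of theorem~\ref{t:affido}: you take order maximal couplings in $\ccC(\mu_n,\mu_{n_k})$, pass to a weak limit of their joint laws on $S\times S$, and use Portmanteau on the closed set $\GG$ to retain the mass bound. Your route is somewhat slicker---it replaces lemma~\ref{cal} and the cited result on weak stability of $\preqsd$ by a single application of Portmanteau to one closed set---at the cost of relying on the full strength of theorem~\ref{t:affido}, whereas the paper's argument needs only the existence of maximal ordered component pairs (proposition~\ref{p:em}). Both approaches exploit the same structural fact, namely that the witnesses to high ordered affinity (whether component pairs or couplings) are stable under weak limits because the order is closed.
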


\begin{remark}
    \label{r:chak}
    In \cite{chakraborty1998completeness} it was shown that $\beta$ is a complete
    metric when $S = \RR^n$.  Due to equivalence of the metrics,
    theorem~\ref{t:bkt} extends this result to partially
    ordered Polish spaces where assumption~\ref{a:c} is satisfied.
\end{remark}

\section{Applications}

\label{s:applic}

In this section we show that many results in classical and monotone Markov
chain theory, hitherto treated separately, can be derived from the same set of
results based around total ordered variation and ordered affinity.  

Regarding notation, if $\{S_i\}$ are partially ordered Polish spaces over $i = 0,
1, 2, \ldots$, we often use a common symbol $\preceq$ for the
partial order on any of these spaces.  On 
products of these spaces we use the product topology and pointwise partial
order.  Once again, the symbol $\preceq$ is used
for the partial order.
For example, if $(x_0, x_1)$ and $(y_0, y_1)$ are points in $S_0 \times
S_1$, then $(x_0, x_1) \preceq (y_0, y_1)$ means that $x_0 \preceq y_0$ and
$x_1 \preceq y_1$.  

A function $P \colon (S_0, \bB_1) \to [0, 1]$ is called a
\emph{Markov kernel from $S_0$ to $S_1$} if $x \mapsto P(x, B)$ is
$\bB_0$-measurable for each $B \in \bB_1$ and $B \mapsto P(x, B)$ is in
$\pP_1$ for all $x \in S_0$.  If $S_0 = S_1 = S$, we will
call $P$ a \emph{Markov kernel on $S$}, or just a Markov kernel.
Following standard conventions (see, e.g., \cite{meyn2012markov}), for any Markov kernel $P$ from $S_0$ to $S_1$, any
$h \in bS_1$ and $\mu \in \pP_0$, we define $\mu P \in \pP_1$ and $Ph \in bS_0$ via
\begin{equation*}
    (\mu P)(B) = \int P(x, B) \mu(dx)
    \quad \text{and} \quad
    (Ph)(x) = \int h(y) P(x, dy).
\end{equation*}
Also, $\mu \otimes P$ denotes the joint distribution on $S_0 \times S_1$
defined by
\begin{equation*}
    (\mu \otimes P)(A \times B)
    = \int_A P(x, B) \mu(d x).
\end{equation*}
To simplify notation, we use $P_x$ to represent the measure $\delta_x P
= P(x, \cdot)$.  $P^m$ is the $m$-th composition of $P$ with itself.

\subsection{Order Affinity and Monotone Markov Kernels}

Let $S$ be a Polish space partially ordered by $\preceq$. A Markov kernel $P$ is called
\emph{monotone} if $Ph \in ibS_0$ whenever $h \in ibS_1$.  An equivalent
condition is that $\mu P \preqsd \nu P$ whenever $\mu \preqsd \nu$; or just
$P(x, \cdot) \preqsd P(y, \cdot)$ whenever $x \preceq y$.  It is well-known
(see, e.g., proposition~1 of \cite{kamae1977stochastic}) that if $\mu \preqsd
\nu$ and $P$ is monotone, then $\mu \otimes P \preqsd \nu \otimes P$.  Note
that, when $\preceq$ is the identity order, every Markov kernel is monotone.

\begin{lemma}
    \label{l:mid}
    If $P$ is a monotone Markov kernel from $S_0$ to $S_1$ and $\mu, \mu',
    \nu$ and $\nu'$ are probabilities in $\pP_0$, then
    \begin{equation*}
        \mu' \preqsd \mu 
        \text{ and } 
        \nu \preqsd \nu' 
        \quad \implies \quad
        \alpha_O(\mu P, \nu P) \leq \alpha_O(\mu' P, \nu' P).
    \end{equation*}
\end{lemma}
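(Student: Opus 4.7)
The plan is to reduce the claim to the first representation of ordered affinity given in Theorem~\ref{t:affido}, namely
\[
\alpha_O(\mu P, \nu P) = 1 - \sup_{I \in i\bB_1} \{(\mu P)(I) - (\nu P)(I)\},
\]
and an analogous identity for $(\mu' P, \nu' P)$. Thus the desired inequality $\alpha_O(\mu P, \nu P) \leq \alpha_O(\mu' P, \nu' P)$ is equivalent to
\[
\sup_{I \in i\bB_1} \{(\mu' P)(I) - (\nu' P)(I)\} \;\leq\; \sup_{I \in i\bB_1} \{(\mu P)(I) - (\nu P)(I)\},
\]
and I would establish this pointwise in $I$ and then take suprema.

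To that end, fix $I \in i\bB_1$ and consider the function $f_I(x) := P(x, I) = (P \1_I)(x)$. Because $I$ is increasing and Borel, $\1_I$ lies in $ibS_1$, and monotonicity of $P$ therefore forces $f_I \in ibS_0$. Since $P(x, \cdot) \in \pP_1$, the function $f_I$ is also valued in $[0,1]$, so $f_I \in iH_0$. The stochastic dominance hypotheses $\mu' \preqsd \mu$ and $\nu \preqsd \nu'$ then yield $\mu'(f_I) \leq \mu(f_I)$ and $\nu(f_I) \leq \nu'(f_I)$. Rewriting $(\mu P)(I) = \mu(f_I)$ and similarly for the other three measures, these two inequalities combine to give
\[
(\mu' P)(I) - (\nu' P)(I) \;\leq\; (\mu P)(I) - (\nu P)(I).
\]

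Taking the supremum over $I \in i\bB_1$ on the left-hand side produces
\[
\sup_{I \in i\bB_1} \{(\mu' P)(I) - (\nu' P)(I)\} \leq \sup_{I \in i\bB_1} \{(\mu P)(I) - (\nu P)(I)\},
\]
and Theorem~\ref{t:affido} converts this into $\alpha_O(\mu P, \nu P) \leq \alpha_O(\mu' P, \nu' P)$, as required. There is no real obstacle here; the only point deserving attention is the observation that monotonicity of $P$ is being used precisely to guarantee that the test function $f_I(x) = P(x, I)$ associated with any increasing Borel set $I$ lies in $iH_0$, which is what allows both dominance hypotheses to be applied to the same $f_I$ simultaneously.
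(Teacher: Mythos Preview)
Your proof is correct and follows essentially the same approach as the paper: both reduce the claim, via the representation in Theorem~\ref{t:affido}, to the inequality $\sup_{I \in i\bB}\{(\mu' P)(I)-(\nu' P)(I)\}\leq \sup_{I \in i\bB}\{(\mu P)(I)-(\nu P)(I)\}$, which follows pointwise in $I$ from monotonicity of $P$ and the stochastic dominance hypotheses. The paper's proof merely asserts this last step, whereas you spell out explicitly that $f_I = P\1_I \in iH_0$ is the test function to which both dominance relations are applied.
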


\begin{proof}
    Let $P, \mu, \mu', \nu$ and $\nu'$ have the stated properties.
    In view of the equivalently representation in \eqref{eq:affco}, the claim
    will be established if 
    \begin{equation*}
        \sup_{I \in i\bB} \{ (\mu P)(I) - (\nu P)(I) \}
        \geq \sup_{I \in i\bB} \{ (\mu' P)(I) - (\nu' P)(I) \}.
    \end{equation*}
    This holds by the monotonicity of $P$ and the 
    order of $\mu, \mu', \nu$ and $\nu'$.
\end{proof}

\begin{lemma}
    \label{l:afup}
    If $P$ is a monotone Markov kernel from $S_0$ to $S_1$, then
    \begin{equation*}
        \alpha_O(\mu P, \nu P) \geq \alpha_O(\mu, \nu)
        \quad \text{for any $\mu, \nu$ in $\pP_0$}.
    \end{equation*}
\end{lemma}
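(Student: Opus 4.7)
My plan is to leverage the component-pair characterisation of ordered affinity directly, rather than routing through couplings. Since Proposition~\ref{p:em} guarantees that the supremum in \eqref{eq:dsig} is attained, I would begin by fixing a maximal ordered component pair $(\mu', \nu') \in \Phi^*(\mu, \nu)$, so that $\mu' \le \mu$, $\nu' \le \nu$, $\mu' \preqsd \nu'$, and $\mu'(S_0) = \nu'(S_0) = \alpha_O(\mu, \nu)$.

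The heart of the argument is then to show that $(\mu' P, \nu' P)$ is itself an ordered component pair for $(\mu P, \nu P)$ with the same total mass $\alpha_O(\mu, \nu)$. Three elementary verifications suffice. First, for any $B \in \bB_1$, the setwise inequality $\mu' \le \mu$ combined with $P(\cdot, B) \ge 0$ gives $(\mu' P)(B) = \int P(x, B)\, \mu'(dx) \le \int P(x, B)\, \mu(dx) = (\mu P)(B)$, and symmetrically $\nu' P \le \nu P$. Second, $\mu' \preqsd \nu'$ implies $\mu' P \preqsd \nu' P$ by the monotonicity of $P$: the preservation of stochastic dominance stated in the paragraph preceding the lemma is phrased for probabilities, but extends immediately to equal-mass finite measures by rescaling (or, equivalently, by applying the test-function characterisation $\mu'(Ph) \le \nu'(Ph)$ for $h \in iH_0$, using that $Ph \in iH_0$ whenever $h \in iH_0$). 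Third, since $P$ is a Markov kernel, $P(x, S_1) = 1$, so $(\mu' P)(S_1) = \mu'(S_0) = \alpha_O(\mu, \nu)$.

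Combining these observations gives $(\mu' P, \nu' P) \in \Phi(\mu P, \nu P)$, and the definition of ordered affinity in \eqref{eq:dsig} yields
$$\alpha_O(\mu P, \nu P) \ge (\mu' P)(S_1) = \alpha_O(\mu, \nu),$$
as required. There is no substantive obstacle in this route; the only point worth scrutinising is the extension of the monotone-kernel statement $\mu \preqsd \nu \Rightarrow \mu P \preqsd \nu P$ from probabilities to equal-mass finite measures, and this reduces to a rescaling argument. A coupling-theoretic alternative---take an order maximal coupling $(X, Y)$ of $(\mu, \nu)$ from Theorem~\ref{t:affido}, then on $\{X \preceq Y\}$ use monotonicity of $P$ together with Nachbin--Strassen to couple draws from $P(X, \cdot)$ and $P(Y, \cdot)$ so that $X' \preceq Y'$ almost surely---would also work, but it would require a measurable-selection argument that the component-pair route avoids entirely.
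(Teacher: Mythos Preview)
Your proof is correct and follows essentially the same route as the paper: both take a maximal ordered component pair $(\mu',\nu')$ for $(\mu,\nu)$, observe that $(\mu' P, \nu' P)$ is an ordered component pair for $(\mu P, \nu P)$ with the same mass, and conclude directly from the definition of $\alpha_O$. The paper's version is terser, but your three explicit verifications (setwise domination, preservation of stochastic dominance under monotone $P$, and mass preservation) are exactly the ingredients it invokes.
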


\begin{proof}
    Fix $\mu, \nu$ in $\pP_0$ and
    let $(\hat \mu, \hat \nu)$ be a maximal ordered component pair for $(\mu, \nu)$.
    From monotonicity of $P$ and the fact the Markov kernels preserve the
    mass of measures, it is clear that $(\hat \mu P, \hat \nu P)$ is an ordered
    component pair for $(\mu P, \nu P)$.  Hence
    \begin{equation*}
        \alpha_O(\mu P, \nu P) 
        \geq (\hat \mu P)(S)
        = \hat \mu(S)
        = \alpha_O(\mu, \nu).
        \qedhere
    \end{equation*}
\end{proof}

On the other hand, for the joint distribution, the ordered affinity of the
initial pair is preserved.

\begin{lemma}
    \label{l:afk}
    If $P$ is a monotone Markov kernel from $S_0$ to $S_1$, then
    \begin{equation*}
        \alpha_O(\mu \otimes P, \nu \otimes P) = \alpha_O(\mu, \nu).
        \quad \text{for any $\mu, \nu$ in $\pP_0$}.
    \end{equation*}
\end{lemma}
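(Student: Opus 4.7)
The plan is to prove the equality by establishing two inequalities, the nontrivial direction mirroring the argument in lemma~\ref{l:afup}, and the reverse direction using marginalization on the product space $S_0 \times S_1$ (equipped with the pointwise partial order).

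For the inequality $\alpha_O(\mu \otimes P, \nu \otimes P) \geq \alpha_O(\mu, \nu)$, I would take a maximal ordered component pair $(\hat\mu, \hat\nu) \in \Phi^*(\mu, \nu)$ (which exists by proposition~\ref{p:em}) and show that $(\hat\mu \otimes P, \hat\nu \otimes P)$ lies in $\Phi(\mu \otimes P, \nu \otimes P)$. The domination $\hat\mu \otimes P \leq \mu \otimes P$ is immediate from $\hat\mu \leq \mu$ and the definition of $\otimes P$ (and similarly for $\nu$). The key step is showing $\hat\mu \otimes P \preqsd \hat\nu \otimes P$; this is the cited classical result (proposition~1 of \cite{kamae1977stochastic}), but since $\hat\mu, \hat\nu$ are subprobability measures I would first normalize by the common mass $c := \hat\mu(S_0) = \hat\nu(S_0)$ (the case $c = 0$ being trivial), apply the result to $\hat\mu / c, \hat\nu / c$, and rescale using lemma~\ref{l:poaf2}(c). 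Since $(\hat\mu \otimes P)(S_0 \times S_1) = \hat\mu(S_0) = \alpha_O(\mu, \nu)$, this yields the desired inequality.

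For the reverse inequality $\alpha_O(\mu \otimes P, \nu \otimes P) \leq \alpha_O(\mu, \nu)$, I would pick a maximal ordered component pair $(\pi_1, \pi_2) \in \Phi^*(\mu \otimes P, \nu \otimes P)$ and form the $S_0$-marginals $\hat\mu(A) := \pi_1(A \times S_1)$ and $\hat\nu(A) := \pi_2(A \times S_1)$. Then $\hat\mu \leq \mu$ and $\hat\nu \leq \nu$ follow by integrating $P(x, S_1) = 1$. The crucial observation is that for any $I \in i\bB_0$, the set $I \times S_1$ is increasing in $S_0 \times S_1$ with respect to the pointwise partial order: if $(x_0, x_1) \in I \times S_1$ and $(x_0, x_1) \preceq (y_0, y_1)$, then $x_0 \preceq y_0$ and $x_0 \in I$, so $y_0 \in I$. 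Applying $\pi_1 \preqsd \pi_2$ to these increasing cylinder sets yields $\hat\mu(I) \leq \hat\nu(I)$, and the equal-mass condition is likewise inherited. Thus $(\hat\mu, \hat\nu) \in \Phi(\mu, \nu)$ with total mass $\hat\mu(S_0) = \pi_1(S_0 \times S_1) = \alpha_O(\mu \otimes P, \nu \otimes P)$, which gives the bound.

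The main subtlety, rather than a true obstacle, is verifying the product-order claim that $I \times S_1$ is increasing when $I$ is; this is where monotonicity of $P$ is not actually needed for the reverse inequality (monotonicity is only used in the forward inequality, via Kamae--Krengel--O'Brien). A minor bookkeeping point is handling the subprobability rescaling cleanly in the first direction, but this is routine given lemma~\ref{l:poaf2}(c).
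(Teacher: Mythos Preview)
Your proof is correct. The lower bound $\alpha_O(\mu \otimes P, \nu \otimes P) \geq \alpha_O(\mu, \nu)$ is handled exactly as in the paper: push a maximal ordered component pair through $\otimes P$ and invoke monotonicity (via Kamae--Krengel--O'Brien). Your normalization to probabilities before applying that result is a fair bit of extra care; the paper simply asserts $\hat\mu \otimes P \preqsd \hat\nu \otimes P$ directly.

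For the upper bound, however, you take a genuinely different route. The paper uses the coupling representation from theorem~\ref{t:affido}: for any coupling $((X_0,X_1),(Y_0,Y_1))$ of $(\mu\otimes P,\nu\otimes P)$ one has $\PP\{(X_0,X_1)\preceq(Y_0,Y_1)\}\leq\PP\{X_0\preceq Y_0\}\leq\alpha_O(\mu,\nu)$, and then takes the supremum over couplings. You instead work purely from the definition of $\alpha_O$ via ordered component pairs, marginalizing a maximal pair on the product down to $S_0$ and checking that cylinder sets $I\times S_1$ are increasing. Your argument is more elementary in that it avoids theorem~\ref{t:affido} (and hence Strassen's theorem) entirely; the paper's argument is shorter once that machinery is in hand and illustrates the coupling viewpoint. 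Both correctly identify that monotonicity of $P$ plays no role in this direction.
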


\begin{proof}
    Fix $\mu, \nu$ in $\pP_0$ and let $(X_0, X_1)$ and $(Y_0, Y_1)$ be 
    random pairs with distributions 
    $\mu \otimes P$ and $\nu \otimes P$ respectively.  We have
    \begin{equation*}
        \PP\{ (X_0, X_1) \preceq (Y_0, Y_1) \}
        \leq \PP\{ X_0 \preceq Y_0 \}
        \leq \alpha_O(\mu, \nu).
    \end{equation*}
    Taking the supremum over all couplings in $\ccC(\mu \otimes P, \nu \otimes
    P)$ shows that $\alpha_O(\mu \otimes P, \nu \otimes P)$ is dominated by
    $\alpha_O(\mu, \nu)$.

    To see the reverse inequality, let $(\hat \mu, \hat \nu)$ be a maximal
    ordered component pair for $(\mu, \nu)$.
    Monotonicity of $P$ now gives $\hat \mu \otimes P \preqsd \hat \nu \otimes
    P$.  Using this and the fact the Markov kernels preserve the
    mass of measures, we see that $(\hat \mu \otimes P, \hat \nu \otimes P)$
    is an ordered component pair for $(\mu \otimes P, \nu \otimes P)$.  Hence
    \begin{equation*}
        \alpha_O(\mu P, \nu P) 
        \geq (\hat \mu \otimes P)(S_0 \times S_1)
        = \hat \mu(S_0)
        = \alpha_O(\mu, \nu).
        \qedhere
    \end{equation*}
\end{proof}

\subsection{Monotone Markov Chains}

Given $\mu \in \pP$ and  Markov kernel $P$ on $S$, a stochastic process $\{X_t\}_{t
\geq 0}$ taking values in $S^{\infty} := \times_{t=0}^\infty S$ will be called
a Markov chain with initial distribution $\mu$ and kernel $P$ if the
distribution of $\{X_t\}$ on $S^{\infty}$ is 
\begin{equation*}
    \bQ_\mu := \mu \otimes P \otimes P \otimes P \otimes \cdots  
\end{equation*}
(The meaning of the right hand side is clarified in, e.g., \S III.8 of
\cite{lindvall2002lectures}, p.~903 of \cite{kamae1977stochastic}, \S 3.4 of
\cite{meyn2012markov}.)  If $P$ is a monotone Markov kernel, then $(x, B) \mapsto
\bQ_x(B) := \bQ_{\delta_x}(B)$ is a monotone Markov kernel from $S$ to $S^\infty$.
See propositions~1 and 2 of \cite{kamae1977stochastic}.

There are various useful results about representations of Markov chains
that are ordered almost surely.  One is that, if the initial conditions
satisfy $\mu \preqsd \nu$ and $P$ is
a monotone Markov kernel, then we can find Markov chains $\{X_t\}$ and
$\{Y_t\}$ with initial distributions $\mu$ and $\nu$ and kernel $P$
such that $X_t \preceq Y_t$ for all $t$ almost surely.  (See, e.g.,
theorem~2 of \cite{kamae1977stochastic}.)  This 
result can be generalized beyond the case where
$\mu$ and $\nu$ are stochastically ordered, using the results presented above.
For example, let $\mu$ and $\nu$ be arbitrary initial distributions and
let $P$ be monotone, so that $\bQ_x$ is likewise monotone.  By
lemma~\ref{l:afk} we have
\begin{equation*}
    \alpha_O(\bQ_\mu, \bQ_\nu) 
    = \alpha_O(\mu \otimes \bQ_x, \nu \otimes \bQ_x) 
    = \alpha_O(\mu, \nu).
\end{equation*}
In other words, the ordered affinity of the entire processes is given by the
ordered affinity of the initial distributions.  It now follows from 
theorem~\ref{t:affido} that there exist Markov chains $\{X_t\}$ and
$\{Y_t\}$ with initial distributions $\mu$ and $\nu$ and Markov kernel $P$
such that
\begin{equation*}
    \PP\{ X_t \preceq Y_t, \; \forall t \geq 0\} = \alpha_O(\mu, \nu).
\end{equation*}
The standard result is a special case, since $\mu \preqsd \nu$ implies
$\alpha_O(\mu, \nu) = 1$, and hence the sequences are ordered almost surely.

\subsection{Nonexpansiveness}

It is well-known that every Markov kernel is nonexpansive with respect to
the total variation norm, so that
\begin{equation}
    \label{eq:netv}
    \| \mu P - \nu P \| \leq \| \mu - \nu \|
    \quad \text{for all }
    (\mu, \nu) \in \pP \times \pP.
\end{equation}
An analogous result is true for $\gamma$ when $P$ is monotone.  That is, 
\begin{equation}
    \label{eq:netov}
    \gamma(\mu P , \nu P ) 
    \leq \gamma( \mu , \nu )
    \quad \text{for all }
    (\mu, \nu) \in \pP \times \pP.
\end{equation}
The bound \eqref{eq:netov} follows directly from lemma~\ref{l:afup}.
Evidently \eqref{eq:netv} be recovered from \eqref{eq:netov} by setting
$\preceq$ to equality.  

Nonexpansiveness is interesting partly in its own right (we apply it in proofs
below) and partly because it suggests that, with some additional assumptions,
we can strengthen it to contractiveness.  We expand on this idea below.

\subsection{An Order Coupling Bound for Markov Chains}

Doeblin \cite{doeblin1938expose} established and exploited the coupling
inequality
\begin{equation}
    \label{eq:d}
    \| \mu_t - \nu_t \| 
    \leq 2 \, \PP\{X_j \not= Y_j \text{ for any } j \leq t\}
\end{equation}
where $\mu_t$ and $\nu_t$ are the time $t$ distributions of Markov chains
$\{X_t\}$ and $\{Y_t\}$ generated by common Markov kernel $P$.
Even when the state space is uncountable, the right hand side of \eqref{eq:d}
can often be shown to converge to zero by manipulating the joint distribution
of $(X_j, Y_j)$ to increase the chance of a meeting  \cite{doob1953stochastic,
    harris1956existence, orey1959recurrent, pitman1974uniform,
thorisson2000coupling, lindvall2002lectures, nummelin2004general,
revuz2008markov,  meyn2012markov}. 

Consider the following generalization: given a monotone Markov kernel $P$ on
$S$ and arbitrary $\mu, \nu \in \pP$, we can construct Markov chains $\{X_t\}$
and $\{Y_t\}$ with common kernel $P$ and respective initial conditions $\mu$
and $\nu$ such that 
\begin{equation}
    \label{eq:do}
    \gamma( \mu_t, \nu_t)
    \leq \PP\{X_j \npreceq Y_j \text{ for any } j \leq t\}
    + \PP\{Y_j \npreceq X_j \text{ for any } j \leq t\}.
\end{equation}
This generalizes \eqref{eq:d} because both the right and left hand
side of \eqref{eq:do} reduces to \eqref{eq:d} if we take $\preceq$ to be
equality. 

To prove \eqref{eq:do}, we need only show that
\begin{equation}
    \label{eq:ido}
    1 - \alpha_O( \mu P^t, \nu P^t) 
    \leq \PP\{X_j \npreceq Y_j \text{ for any } j \leq t\},
\end{equation}
since, with \eqref{eq:ido} is established,
we can reverse the roles of $\{X_t\}$ and $\{Y_t\}$ in \eqref{eq:ido}
to obtain $1 - \alpha_O( \nu P^t, \mu P^t) \leq \PP\{Y_j \npreceq X_j \text{
for any } j \leq t\}$ and then add this inequality to \eqref{eq:ido}
to produce \eqref{eq:do}.

If $\{X_t\}$ and $\{Y_t\}$ are Markov chains with kernel $P$ and initial conditions $\mu$ and
$\nu$, then \eqref{eq:affco} yields $\alpha_O( \mu P^t, \nu P^t) \geq \PP\{X_t
\preceq Y_t\}$.  Therefore, we need only construct such chains with the
additional property that
\begin{equation}
    \label{eq:oia}
    \PP\{X_j \npreceq Y_j \text{ for any } j \leq t\} 
    = \PP\{X_t \npreceq Y_t\}.
\end{equation}
Intuitively we can do so by using a ``conditional'' version of the Nachbin--Strassen theorem,
producing chains that, once ordered, remain ordered almost surely.  
This can be formalized as follows: By \cite[theorem~2.3]{machida2010monotone}, there exists a
Markov kernel $M$ on $S \times S$ such that $\GG$ is absorbing for $M$ (i.e.,
$M((x,y), \GG) = 1$ for all $(x,y)$ in $\GG$),
\begin{equation*}
    P(x, A) = M((x,y), \, A \times S)
    \quad \text{and} \quad
    P(y, B) = M((x,y), \, S \times B)
\end{equation*}
for all $(x,y) \in S \times S$ and all $A, B \in \bB$.  Given $M$, let $\eta$
be a distribution on $S \times S$ with marginals $\mu$ and $\nu$, let 
$\bQ_\eta := \eta \otimes M \otimes M \otimes \cdots$ be the induced joint
distribution, and let $\{(X_t, Y_t)\}$ have
distribution $\bQ_\eta$ on $(S \times S)^{\infty}$.  By construction, $X_t$
has distribution $\mu P^t$ and $Y_t$
has distribution $\nu P^t$.  Moreover, \eqref{eq:oia} is valid because $\GG$
is absorbing for $M$, and hence 
$\PP\{(X_j, Y_j) \notin \GG \text{ for any } j \leq t\} = \PP\{(X_t,  Y_t)
\notin \GG\}$.

\subsection{Uniform Ergodicity}

\label{ss:ue}

Let $S$ be a partially ordered Polish space satisfying assmption~\ref{a:c},
and let $P$ be a monotone Markov kernel on $S$.  A distribution $\pi$ is
called \emph{stationary} for $P$ if $\pi P = \pi$.  Consider the value
\begin{equation}
    \label{eq:odob}
    \sigma(P)
    := \inf_{(x, y) \in S \times S}  \alpha_O(P_x, P_y) ,
\end{equation}
which can be understood as an order-theoretic 
extension of the Markov--Dobrushin coefficient of ergodicity
\cite{dobrushin1956central, seneta1979coefficients}.  It reduces to the usual 
notion when $\preceq$ is equality.

\begin{theorem}
    \label{t:ue}
    If $P$ is monotone, then
    \begin{equation}
        \label{eq:fbod}
        \gamma(\mu P, \nu P) \leq (1 - \sigma(P)) \, \gamma(\mu, \nu)
        \quad \text{for all }
        (\mu, \nu) \in \pP \times \pP. 
    \end{equation}
\end{theorem}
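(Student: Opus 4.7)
The plan is to prove the contraction bound by treating separately each of the two terms in the representation
\[
\gamma(\mu,\nu) = (1 - \alpha_O(\mu,\nu)) + (1 - \alpha_O(\nu,\mu)),
\]
which follows from combining \eqref{eq:tvsio} with theorem~\ref{t:affido}. Concretely, I will show that
\[
1 - \alpha_O(\mu P, \nu P) \leq (1 - \sigma(P))(1 - \alpha_O(\mu, \nu));
\]
the symmetric bound with $\mu$ and $\nu$ swapped follows by the same argument, and summing the two yields \eqref{eq:fbod}.

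First I would invoke proposition~\ref{p:em} to select a maximal ordered component pair $(\hat\mu, \hat\nu) \in \Phi^*(\mu, \nu)$, so that $\hat\mu \leq \mu$, $\hat\nu \leq \nu$, $\hat\mu \preqsd \hat\nu$, and $\hat\mu(S) = \hat\nu(S) = a := \alpha_O(\mu, \nu)$. Decomposing $\mu = \hat\mu + \mu'$ and $\nu = \hat\nu + \nu'$, the residuals satisfy $\mu'(S) = \nu'(S) = 1 - a$. Monotonicity of $P$ preserves stochastic dominance, so $\hat\mu P \preqsd \hat\nu P$; hence $(\hat\mu P - \hat\nu P)(I) \leq 0$ and consequently
\[
(\mu P - \nu P)(I) = (\hat\mu P - \hat\nu P)(I) + (\mu' P - \nu' P)(I) \leq (\mu' P - \nu' P)(I)
\]
for every $I \in i\bB$. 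If $a = 1$, lemma~\ref{l:poaf2}(b) gives $\mu \preqsd \nu$ and hence $\alpha_O(\mu P, \nu P) = 1$, settling the case trivially. Otherwise, normalize to probability measures $\tilde\mu := \mu'/(1-a)$ and $\tilde\nu := \nu'/(1-a)$.

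Next comes the Dobrushin-style step in the ordered setting. Using the identity
\[
(\tilde\mu P - \tilde\nu P)(I) = \int \int [P(x, I) - P(y, I)] \, \tilde\mu(\diff x) \, \tilde\nu(\diff y),
\]
I would bound the integrand pointwise: for each $I \in i\bB$ and each pair $(x, y)$, the first equality in \eqref{eq:affco} gives $P(x, I) - P(y, I) \leq 1 - \alpha_O(P_x, P_y)$, which is in turn bounded by $1 - \sigma(P)$ by the definition \eqref{eq:odob}. The double integral is therefore at most $1 - \sigma(P)$, so $(\mu' P - \nu' P)(I) \leq (1 - a)(1 - \sigma(P))$. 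Taking the supremum over $I \in i\bB$ and applying \eqref{eq:affco} once more produces the one-sided bound $1 - \alpha_O(\mu P, \nu P) \leq (1 - \sigma(P))(1 - \alpha_O(\mu, \nu))$, and the proof then closes by the symmetric argument.

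The main obstacle, although not especially deep, is to isolate the correct ``ordered'' pieces at each step: one needs monotonicity of $P$ both to discard the ordered component (so that $\hat\mu P \preqsd \hat\nu P$) and to guarantee that $x \mapsto P(x, I)$ lies in $iH_0$ whenever $I \in i\bB$, so that the relevant pointwise Dobrushin-type bound is the ordered supremum $\sup_{I' \in i\bB}[P(x, I') - P(y, I')] = 1 - \alpha_O(P_x, P_y)$ rather than the full-variation supremum. Once these ingredients are in place, the argument parallels the classical Dobrushin contraction for total variation, which is recovered as the special case where $\preceq$ is the identity order.
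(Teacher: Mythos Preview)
Your argument is correct and follows the same underlying strategy as the paper: split off a maximal ordered component pair, use monotonicity of $P$ to discard it, and then control the residual via the Dobrushin-type bound $\sup_{I\in i\bB}\{P(x,I)-P(y,I)\}=1-\alpha_O(P_x,P_y)\leq 1-\sigma(P)$.

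The packaging differs slightly. The paper introduces the auxiliary function $g(\mu,\nu)=\|\mu\|-\alpha_O(\mu,\nu)$ and proves two separate lemmas: lemma~\ref{l:ed}, which upgrades the pointwise infimum $\sigma(P)$ to an infimum over all of $\pP\times\pP$, and lemma~\ref{l:cacher}, which says $g(\mu P,\nu P)\leq g(\hat\mu P,\hat\nu P)$ for the residuals. Your proof absorbs both of these into a single direct computation: the inequality $(\hat\mu P-\hat\nu P)(I)\leq 0$ replaces lemma~\ref{l:cacher}, and the double-integral bound on $(\tilde\mu P-\tilde\nu P)(I)$ is exactly the content of the proof of lemma~\ref{l:ed}. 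Your route is thus a bit more economical, while the paper's factorization isolates reusable pieces (in particular lemma~\ref{l:ed} has independent interest as an alternative characterization of $\sigma(P)$). One small remark: the observation that $x\mapsto P(x,I)\in iH_0$ is not actually needed for your pointwise bound, since $P(x,I)-P(y,I)\leq \sup_{I'\in i\bB}\{P(x,I')-P(y,I')\}$ holds trivially for $I\in i\bB$; monotonicity of $P$ enters only through $\hat\mu P\preqsd\hat\nu P$.
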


Thus, strict positivity of $\sigma(P)$ implies that $\mu \mapsto \mu P$ is a
contraction map on $(\pP, \gamma)$.  
Moreover, in many settings, the bound in
\eqref{eq:fbod} cannot be improved upon. For example,

\begin{lemma}
    \label{l:cbi}
    If $P$ is monotone, $S$ is not a singleton and any $x,y$ in
    $S$ have a lower bound in $S$, then 
    \begin{equation}
        \label{eq:ev}
        \forall \; \xi > \sigma(P), \; \;
        \exists \; \mu, \nu \in \pP
        \; \text{ s.t. } \;
        \gamma(\mu P, \nu P) > (1 - \xi) \, \gamma(\mu, \nu).
    \end{equation}
\end{lemma}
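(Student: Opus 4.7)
The plan is to exhibit two Dirac distributions whose total ordered variation expands under $P$ faster than the factor $1-\xi$ permits. The central observation is that $\sigma(P)$ is inherently asymmetric: a near-minimizing pair $(x_0, y_0)$ in \eqref{eq:odob} gives control over $\alpha_O(P_{x_0}, P_{y_0})$ but not over $\alpha_O(P_{y_0}, P_{x_0})$, so a naive choice $\mu = \delta_{x_0}$, $\nu = \delta_{y_0}$ cannot be pushed through. Instead, I take $\nu = \delta_z$ for a common lower bound $z$ of $x_0$ and $y_0$: monotonicity of $P$ then forces $P_z \preqsd P_{x_0}$, which makes one of the two ordered-affinity terms in $\gamma(\mu P, \nu P)$ equal $1$ and leaves only the term $\alpha_O(P_{x_0}, P_z)$ to bound.

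Concretely, for $\xi \in (\sigma(P), 1)$ I would first select $x_0, y_0 \in S$ with $\alpha_O(P_{x_0}, P_{y_0}) < \xi$ by definition of the infimum. Since $\xi < 1$, monotonicity of $P$ and lemma~\ref{l:poaf2}(b) force $x_0 \not\preceq y_0$ (else $\alpha_O(P_{x_0}, P_{y_0}) = 1$). The lower-bound hypothesis then yields $z \in S$ with $z \preceq x_0$ and $z \preceq y_0$, and $z \neq x_0$, because $z = x_0$ would contradict $x_0 \not\preceq y_0$. Set $\mu := \delta_{x_0}$ and $\nu := \delta_z$. Using the Dirac evaluation $\alpha_O(\delta_a, \delta_b) = \1\{a \preceq b\}$ together with antisymmetry, $\gamma(\mu, \nu) = 2 - 0 - 1 = 1$. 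On the other hand, $z \preceq x_0$ and monotonicity of $P$ give $P_z \preqsd P_{x_0}$, so $\alpha_O(P_z, P_{x_0}) = 1$ and $\gamma(\mu P, \nu P) = 1 - \alpha_O(P_{x_0}, P_z)$. Finally, lemma~\ref{l:mid} applied with the identity kernel yields monotonicity of $\alpha_O$ in its second argument, and since $z \preceq y_0$ gives $P_z \preqsd P_{y_0}$, we obtain $\alpha_O(P_{x_0}, P_z) \leq \alpha_O(P_{x_0}, P_{y_0}) < \xi$. Combining, $\gamma(\mu P, \nu P) > 1 - \xi = (1-\xi)\gamma(\mu, \nu)$.

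The boundary case $\xi \geq 1$ is routine: either reduce to an auxiliary $\xi' \in (\sigma(P), 1)$ (available when $\sigma(P) < 1$) and exploit the strictly larger gap, or observe directly that when $\sigma(P) = 1$ one has $\xi > 1$ strictly and any two Diracs at distinct points in $S$ (available by non-singleton-ness) make the right hand side negative. The conceptual obstacle is the asymmetry already mentioned: recognizing that introducing a common lower bound surgically eliminates the uncontrolled direction of ordered affinity. The only subordinate subtlety—ensuring $z \neq x_0$ so that $\gamma(\mu, \nu) > 0$—falls out of the strict incomparability $x_0 \not\preceq y_0$ established en route.
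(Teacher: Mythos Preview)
Your proof is correct and follows essentially the same approach as the paper: pick a near-minimizing pair for the infimum in \eqref{eq:odob}, observe that the pair cannot be ordered (else the ordered affinity would be $1$), replace the second coordinate by a common lower bound $z$ so that one direction of the ordered affinity between $P_{x_0}$ and $P_z$ becomes $1$, and then use lemma~\ref{l:mid} to bound the remaining direction by $\alpha_O(P_{x_0},P_{y_0})<\xi$. The only cosmetic difference is that you split cases according to whether $\xi<1$ while the paper splits according to whether $\sigma(P)<1$, and you invoke lemma~\ref{l:mid} with the identity kernel on the image measures whereas the paper applies it with $P$ on the Dirac initial conditions; both routes are equivalent.
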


The significance of theorem~\ref{t:ue} is summarized in the next
corollary.

\begin{corollary} 
    \label{c:ue}
    Let $P$ be monotone and let $S$ satisfy assumption~\ref{a:c}.  If there
    exists an $m \in \NN$ such that $\sigma(P^m) > 0$, then $P$ has a unique
    stationary distribution $\pi$ in $\pP$, and
    \begin{equation}
        \label{eq:ifbod}
        \gamma(\mu P^t, \pi) 
        \leq (1 - \sigma(P^m))^{\floor*{t/m}} \, \gamma(\mu, \pi) 
        \quad \text{ for all } \mu \in \pP, \; t \geq 0. 
    \end{equation}
\end{corollary}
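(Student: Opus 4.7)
The plan is to reduce the statement to the Banach fixed point theorem applied to the iterate $P^m$ on the complete metric space $(\pP, \gamma)$, using theorem~\ref{t:ue} to supply the contraction modulus and using the nonexpansiveness bound \eqref{eq:netov} to handle the ``remainder'' iterates past the largest multiple of $m$.

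First I would observe that monotonicity of $P$ passes to $P^m$: iterating the defining condition $Ph \in ibS$ for $h \in ibS$ shows that $P^m$ maps $ibS$ into itself, so $P^m$ is a monotone Markov kernel on $S$. Applying theorem~\ref{t:ue} to $P^m$ then gives
\begin{equation*}
    \gamma(\mu P^m, \nu P^m) \leq (1 - \sigma(P^m)) \, \gamma(\mu, \nu)
    \quad \text{for all } (\mu, \nu) \in \pP \times \pP,
\end{equation*}
so that $\mu \mapsto \mu P^m$ is a strict contraction on $(\pP, \gamma)$ with modulus $1 - \sigma(P^m) < 1$ by hypothesis. By theorem~\ref{t:bkt} the space $(\pP, \gamma)$ is complete under assumption~\ref{a:c}, so the Banach fixed point theorem produces a unique $\pi \in \pP$ with $\pi P^m = \pi$. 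To upgrade this to stationarity for $P$ itself, I would note that $(\pi P) P^m = (\pi P^m) P = \pi P$, so $\pi P$ is also a fixed point of $P^m$; uniqueness then forces $\pi P = \pi$. Any other $P$-stationary distribution is automatically $P^m$-stationary, and so equals $\pi$ by the same uniqueness, giving uniqueness of the stationary distribution for $P$.

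For the quantitative bound, I would write $t = km + r$ with $k = \floor*{t/m}$ and $0 \leq r < m$, and then apply the contraction $k$ times to $P^m$ followed by nonexpansiveness applied $r$ times to $P$, using $\pi = \pi P^s$ for every $s \geq 0$:
\begin{equation*}
    \gamma(\mu P^t, \pi)
    = \gamma\bigl((\mu P^r)(P^m)^k, \, \pi (P^m)^k\bigr)
    \leq (1 - \sigma(P^m))^k \, \gamma(\mu P^r, \pi P^r)
    \leq (1 - \sigma(P^m))^k \, \gamma(\mu, \pi).
\end{equation*}
This delivers the asserted bound for all $t \geq 0$, including the trivial case $t < m$ where the exponent is $0$ and the bound degenerates to nonexpansiveness.

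I do not expect any serious obstacle here: the argument is entirely a packaging of theorem~\ref{t:ue}, \eqref{eq:netov}, and theorem~\ref{t:bkt} through Banach's theorem. The one place that requires a small amount of care is the promotion of the $P^m$-fixed point to a $P$-fixed point, but the commutation $P \cdot P^m = P^m \cdot P$ together with uniqueness of the $P^m$-fixed point disposes of this cleanly, as above.
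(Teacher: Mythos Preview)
Your proposal is correct and follows essentially the same approach as the paper: apply theorem~\ref{t:ue} to $P^m$, invoke completeness from theorem~\ref{t:bkt}, use Banach's theorem for existence and uniqueness, and combine the contraction bound for $P^m$ with nonexpansiveness \eqref{eq:netov} for the residual iterates to obtain \eqref{eq:ifbod}. If anything, your write-up is more careful than the paper's: you explicitly verify that $P^m$ inherits monotonicity and you spell out the promotion of the $P^m$-fixed point to a $P$-fixed point via the commutation argument, both of which the paper's proof leaves implicit.
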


Here $\floor*{x}$ is the largest $n \in \NN$ with $n \leq x$.

\begin{proof}
    Let $P$ and $\mu$ be as in the statement of the theorem.  The existence of
    a fixed point of $\mu \mapsto \mu P$, and hence a stationary distribution
    $\pi \in \pP$, follows from theorem~\ref{t:ue} applied to $P^m$,
    Banach's contraction mapping theorem, and the completeness
    of $(\pP, \gamma)$ shown in theorem~\ref{t:bkt}.
    The bound in \eqref{eq:ifbod} follows from \eqref{eq:fbod} applied to
    $P^m$ and the nonexpansiveness of $P$ in the metric $\gamma$ (see
    \eqref{eq:netov}).
\end{proof}

\subsection{Applications from the Introduction}

Let us see how corollary~\ref{c:ue} can be used to show stability of the two
models discussed in the introduction, beginning with the monotone model in
\eqref{eq:bern}. The state space is $S=[0, 1]$ with its standard order and all
assumptions are as per the discussion immediately following \eqref{eq:bern}.
We let $P$ be the corresponding Markov kernel and consider the following coupling of $(P_1, P_0)$:
Let $W$ be a draw from the Bernoulli$(1/2)$ distribution and let $V = 1 - W$.
Then $P_1$ and $P_0$ are, respectively, the distributions of
\begin{equation*}
    X := \frac{1 + W}{2} 
    \quad \text{and} \quad
    Y := \frac{0 + V}{2} = \frac{1 - W}{2}.
\end{equation*}
Since $X \leq Y$ if and only if $W=0$, we have, by lemma~\ref{l:mid} and \eqref{eq:affco}, 
\begin{equation*}
    \alpha_O(P_x, P_y) 
    \geq \alpha_O(P_1, P_0) 
    \geq \PP\{ X \preceq Y \} = \frac{1}{2}
\end{equation*}
for all $x, y \in S$.  From the definition in \eqref{eq:odob} we then have
$\sigma(P) \geq 1/2$, and globally stability in the $\gamma$ metric follows
from corollary~\ref{c:ue}.

Next we turn to the inventory model in \eqref{eq:invent}, with state space
$S=[0, K]$ and $\{W_t\}$ an {\sc IID} process satisfying $\PP\{W_t \geq w\} >
0$ for any real $w$.  Let $\{X_t\}$ and $\{Y_t\}$ be generated by
\eqref{eq:invent}, with common shock sequence $\{W_t\}$, and respective
initial conditions $x,y$ in $S$.  In view of \eqref{eq:affc}, we have
\begin{equation*}
    \alpha(P_x, P_y) 
    \geq \PP\{X_1 = Y_1\}
    \geq \PP\{W_1 \geq K\} =: \kappa > 0.
\end{equation*}
Letting $\preceq$ be equality on $S$, we have $\alpha_O(P_x, P_y) =
\alpha(P_x, P_y) \geq \kappa$, and hence $\sigma(P) \geq \kappa$.
Globally stability in the $\gamma$ metric follows from corollary~\ref{c:ue}.

\subsection{General Applications}

Next we show that theorem~\ref{t:ue} and in particular corollary~\ref{t:ue}
cover as special cases two well known results on Markov chain stability from
the classical literature on one hand and the more recent monotone Markov chain
literature on the other.

Consider first the standard notion of uniform ergodicity:  A Markov kernel $P$ on
$S$ is called \emph{uniformly ergodic} if it has a stationary distribution
$\pi$ and $\sup_{x \in S} \| P_x^t - \pi \| \to 0$ as $t \to \infty$.  Uniform
ergodicity was studied by Markov \cite{markov1906extension} in a countable
state space and by Doeblin \cite{doeblin1938proprietes},  Yoshida and Kakutani
\cite{yoshida1941operator}, Doob \cite{doob1953stochastic} and many subsequent
authors in a general state space. It is defined and reviewed in chapter 16 of
\cite{meyn2012markov}.  One of the most familiar equivalent conditions for
uniform ergodicity \cite[thm.~16.0.2]{meyn2012markov} is the existence of an
$m \in \NN$ and a nontrivial $\phi \in \mM$ such that $P^m_x \geq  \phi$ for
all $x$ in $S$.

One can recover this result using
corollary~\ref{c:ue}.  Take $\preceq$ to be equality, in which
case every Markov operator is monotone, $\gamma$ is total variation distance
and assumption~\ref{a:c} is always satisfied.  Moreover, $\sigma(P^m)$ reduces to
the ordinary ergodicity coefficient of $P^m$, evaluated using the standard
notion of affinity, and hence
\begin{equation*}
    \sigma(P^m) 
    = \inf_{(x, y) \in S \times S}  \alpha(P^m_x, \, P^m_y)
    = \inf_{(x, y) \in S \times S}  (P^m_x \wedge P^m_y) (S)
    \geq \phi(S) > 0.
\end{equation*}
Thus, all the conditions of corollary~\ref{c:ue} are satisfied, and
\begin{equation*}
    \sup_{x \in S}
        \| P^t_x - \pi \| 
    = \sup_{x \in S}
        \gamma( P^t_x \, ,  \, \pi) 
        \leq 2 (1 - \sigma(P^m))^{\floor*{t/m}} \to 0
    \qquad (t \to \infty).
\end{equation*}

Now consider the setting of Bhattacharya and Lee
\cite{bhattacharya1988asymptotics}, where $S = \RR^n$, $\preceq$ is the usual
pointwise partial order $\leq$ for vectors,
and $\{g_t\}$ is a sequence of {\sc iid} random maps from $S$ to itself,
generating $\{X_t\}$ via $X_t = g_t (X_{t-1}) =
g_t \circ \cdots \circ g_1 (X_0)$.  The corresponding Markov kernel
is $P(x,B) = \PP\{g_1(x) \in B\}$.  
The random maps are assumed to be order preserving on $S$, so that $P$ is
monotone.  Bhattacharya and Lee use a ``splitting condition,'' which
assumes existence of a $\bar x \in S$ and $m \in \NN$ such that
\begin{enumerate}
    \item $s_1 := 
    \PP \{ g_m \circ \cdots \circ g_1 (y) 
        \leq \bar x, \; \forall y \in S\} > 0$ and
    \item $s_2 :=
        \PP \{ g_m \circ \cdots \circ g_1 (y) \geq \bar
        x, \; \forall y \in S\} > 0$.
\end{enumerate}
Under these assumptions, they show that 
$\sup_{x \in S} \beta(P^t_x, \, \pi)$ converges to zero exponentially fast in
$t$, where $\beta$ is the Bhattacharya metric introduced in \eqref{eq:bhm}.
This finding extends earlier results by Dubins and Freedman
\cite{dubins1966invariant} and Yahav \cite{yahav1975fixed} to multiple
dimensions.

This result can be obtained as a special case of corollary~\ref{c:ue}.  
Certainly $S$ is a partially ordered Polish space and
assumption~\ref{a:c} is satisfied.  Moreover, the ordered ergodicity
coefficient $\sigma(P^m)$ is strictly positive.  To see this, suppose that the
splitting condition is satisfied at $m \in \NN$. Pick any $x, y \in S$
and let $\{X_t\}$ and $\{Y_t\}$ be independent copies of the Markov chain,
starting at $x$ and $y$ respectively.  We have
\begin{equation*}
    \sigma(P^m)
     \geq \PP\{X_m \leq Y_m\} 
     \geq \PP\{X_m \leq \bar x \leq Y_m\} 
     = \PP\{X_m \leq \bar x \} \PP\{\bar x \leq Y_m\} \geq s_1 s_2.
\end{equation*}
The last term is strictly positive by assumption.  Hence all the conditions of
corollary~\ref{c:ue} are satisfied, a unique stationary distribution $\pi$
exists, and $\sup_{x \in S} \gamma(P^t_x, \, \pi)$
converges to zero exponentially fast in $t$.  We showed in \eqref{eq:em} that
$\beta \leq 2\gamma$, so the same convergence holds for the Bhattacharya metric.

We can also recover a related convergence result due to Hopenhayn and
Prescott \cite[theorem~2]{hopenhayn1992stochastic}
that is routinely applied to stochastic stability problems in economics.  They assume that $S$ is a compact metric
space with a closed partial order and a least element $a$ and greatest element
$b$.  They suppose that $P$ is monotone, and that there exists an $\bar x$ in $S$ and an $m \in \NN$ such that 
\begin{equation}
    \label{eq:mmc}
    P^m(a, [\bar x, b]) > 0
    \quad \text{and} \quad
    P^m(b, [a, \bar x]) > 0.
\end{equation}
In this setting, they show that $P$ has a unique
stationary distribution $\pi$ and $\mu P^t \toweak \pi$
for any $\mu \in \pP$ as $t \to \infty$.  
This result can be obtained from corollary~\ref{c:ue}.  Under the stated
assumptions, $S$ is Polish and assumption~\ref{a:c} is satisfied.  
The coefficient $\sigma(P^m)$ is strictly positive because, if we
let $\{X_t\}$ and $\{Y_t\}$ be independent copies of the Markov chain
starting at $b$ and $a$ respectively, then, since $(X_m, Y_m) \in \ccC(P^m_b,
P^m_a)$, we have
\begin{equation*}
    \alpha_O(P^m_b, P^m_a)
     \geq \PP\{X_m \leq Y_m\} 
     \geq \PP\{X_m \leq \bar x \leq Y_m\} 
     = \PP\{X_m \leq \bar x \} \PP\{\bar x \leq Y_m\}.
\end{equation*}
The last term is strictly positive by \eqref{eq:mmc}.
Positivity of $\sigma(P^m)$ now follows from 
lemma~\ref{l:mid}, since $a \preceq x, y \preceq b$ for all $x, y \in S$.
Hence, by corollary~\ref{c:ue},  there exists a unique
stationary distribution $\pi$ and $\gamma(\mu P^t, \pi) \to 0$ 
as $t \to \infty$ for any $\mu \in \pP$.  This convergence implies weak
convergence by proposition~\ref{p:iwc} and compactness of $S$.

\section{Appendix}

The appendix collects remaining proofs.  Throughout, in addition to notation
defined above, $cbS_0$ denotes all continuous
functions $h \colon S \to [0, 1]$, while 
\begin{equation*}
    g(\mu, \nu) := \| \mu \| - \alpha_O(\mu, \nu)
    \quad \text{for each $\mu, \nu \in \mM$}.
\end{equation*}

\subsection{Proofs of Section~\ref{s:otg} Results}

\begin{proof}[Proof of lemma~\ref{l:cfs}]
    For the first equality, fix $\lambda \in \mM_s$ and let
    \begin{equation*}
        s(\lambda) := \sup_{I \in i\bB} \lambda(I) 
        \quad \text{and} \quad
        b(\lambda) := \sup_{h \in iH_0} \lambda(h).
    \end{equation*}
    Since $\1_{I} \in ibS$ for all $I \in i\bB$, we have $b(\lambda) \geq s(\lambda)$.
    To see the reverse inequality, let $h \in iH_0$.  Fix $n \in \NN$.  Let
        $r_{j} := j/n$ for $j = 0, \ldots, n$.  Define $h_{n} \in iH_0$ by
    \begin{equation*} 
        h_{n}(x) = \max\{r \in \{r_{0}, \ldots, r_{n}\} : r \leq h(x)\}.
    \end{equation*}
    Since $h \leq h_{n} + 1/n$, we have
    \begin{equation} 
        \label{calpie}
        \lambda (h) \leq \lambda (h_{n}) + \frac{\|\lambda\|}{n}.
    \end{equation}
    For $j = 0, \ldots, n$, let $I_{j} := \{x \in S : h_{n}(x) \geq r_{j}\} \in i\bB$.
    Note that
    \begin{equation} 
        \label{capable}
        I_{n} = \{x \in S : h_{n}(x) = 1\} \subset I_{n-1} \subset \cdots
        \subset I_{0} = S.
    \end{equation}
    We have
        $\lambda (h_{n} )
        = \lambda(I_{n}) + \sum_{j=1}^{n} r_{n-j} \lambda(I_{n-j} 
        \setminus I_{n-j+1})$.
    We define $f_{0}, \ldots, f_{n-1} \in iH_0$ and $A_{0}, \ldots, A_{n-1}
    \in i\bB$ as follows.  Let $f_{0} = h_{n}$ and $A_{0} = I_{n}$.  Evidently
    \begin{equation*} 
        \lambda (f_{0}) \geq \lambda (h_{n}), 
        \quad \A x \in A_{0}, \; f_{0}(x) = 1, 
        \quad \A x \in I_{n-1} \setminus A_{0}, \; f_{0}(x) = r_{n-1}.
    \end{equation*}
    Now suppose that for some $j \in \{0, 1, \ldots, n-2\}$, we have
    \begin{equation} 
        \label{calfeutrage}
        \lambda (f_{j}) \geq \lambda (h_{n}), 
        \quad \A x \in A_{j}, \; f_{j}(x) = 1,
        \quad \A x \in I_{n-j-1} \setminus A_{j}, \; f_{j}(x) = r_{n-j-1}.
    \end{equation}
    If $\lambda(I_{n-j-1} \setminus A_{j}) > 0$, then define
    \begin{equation*}
        f_{j+1}(x) =
        \begin{cases}
            1 & \text{if $x \in I_{n-j-1} \setminus A_{j}$,}
            \\
            f_{j}(x) & \text{otherwise,}
        \end{cases}
        \qquad \text{and} \qquad
        A_{j+1} = I_{n-j-1}.
    \end{equation*}
    Note that in this case
    \begin{gather*} 
        \lambda (f_{j+1}) - \lambda (f_{j} )
        = (1-r_{n-j-1}) \lambda(I_{n-j-1} \setminus A_{j}) > 0,
        \\ \label{calfater}
        \A x \in I_{n-j-2} \setminus A_{j+1}, \quad f_{j+1}(x) = r_{n-j-2}.
    \end{gather*}
    If $\lambda(I_{n-j-1} \setminus A_{j}) \leq 0$, then define
    \begin{equation*}
        f_{j+1}(x) =
        \begin{cases}
            r_{n-j-2} & \text{if $x \in I_{n-j-1} \setminus A_{j}$,}
            \\
            f_{j}(x) & \text{otherwise,}
        \end{cases}
        \qquad \text{and} \qquad
        A_{j+1} = A_{j}.
    \end{equation*}
    In this case 
        $\lambda (f_{j+1}) - \lambda (f_{j} )
        = (r_{n-j-2} - r_{n-j-1}) \lambda(I_{n-j-1} \setminus A_{j}) \geq 0$.
    We also have (\ref{calfater}) by construction.  Thus in both cases, we
    have (\ref{calfeutrage}) with $j$ replaced by $j+1$.  Continuing this way,
    we see that (\ref{calfeutrage}) holds for all $j = 0, \ldots, n-1$.

    Let $j = n-1$ in (\ref{calfeutrage}).  From the definition of $r_j$ and
    (\ref{capable}) we have $r_{0} = 0$ and $I_{0} = S$.  Thus
    \begin{equation*} 
        \lambda (f_{n-1}) \geq \lambda (h_{n}), 
        \quad \A x \in A_{n-1}, \; f_{n-1}(x) = 1,
        \quad \A x \in S \setminus A_{n-1}, \; f_{n-1}(x) = 0.
    \end{equation*}
    Since $f_{n-1} = \1_{A_{n-1}}$ and $A_{n-1} = I_{j}$ for some $j \in \{0,
    \ldots, n-1\}$, recalling (\ref{calpie}) we have
    \begin{equation*}
        \lambda (h)  - \frac{\|\lambda\|}{n} 
        \leq \lambda (h_{n} )
        \leq \lambda(A_{n-1}) 
        \leq s(\lambda).
    \end{equation*}
    Applying $\sup_{h \in iH_0}$ to the leftmost side, we see that $b(\lambda)
    - 1/n \leq s(\lambda)$.  Since this is true for any $n \in \NN$, we obtain
    $b(\lambda) \leq s(\lambda)$.

    The claim \eqref{eq:cfs2} follows from $|a| = \max\{a, -a\}$ and 
    interchange of $\max$ and $\sup$.
\end{proof}

\begin{proof}[Proof of theorem~\ref{t:affido}]
    Let $(X,Y)$ be a coupling of $(\mu, \nu)$, and define
    \begin{equation*}
        \mu'(B) := \PP\{X \in B,\, X \preceq Y\}
        \quad \text{and} \quad
        \nu'(B) := \PP\{Y \in B,\, X \preceq Y\}.
    \end{equation*}
    Clearly $\mu' \leq \mu$, $\nu' \leq \nu$ and $\mu'(S) = \PP\{X \preceq Y\}
    = \nu'(S)$.  Moreover, for any increasing set $I \in \bB$ we clearly have
        $\mu'(I) = \nu'(I)$.
    Hence $(\mu', \nu') \in \Phi(\mu, \nu)$ and $\PP\{X \preceq Y\} =
    \mu'(S) \leq \alpha_O(\mu, \nu)$.  We now exhibit a coupling such that
    equality is attained. In doing so, 
    we can assume that $a := \alpha_O(\mu, \nu) > 0$.\footnote{If not,
        then for any $(X,Y) \in \ccC(\mu, \nu)$ we have $0 \leq \PP\{X \preceq
        Y\} \leq \alpha_O(\mu, \nu) = 0$.}  

    To begin, observe that, by proposition~\ref{p:em}, there exists a pair
    $(\mu', \nu') \in \Phi(\mu, \nu)$ with $\mu'(S) = \nu'(S) = a$.  Let
        $\mu^r := \frac{\mu - \mu'}{1 - a}$
        and $\nu^r := \frac{\nu - \nu'}{1 - a}$.
    By construction, $\mu^r, \nu^r, \mu' / a$ and $\nu' / a$ are probability measures
    satisfying
    \begin{equation*}
        \mu = (1 - a) \mu^r + a (\mu'/a)
        \quad \text{and} \quad
        \nu = (1 - a) \nu^r + a (\nu'/ a).
    \end{equation*} 
    We construct a coupling $(X,Y)$ as follows.  Let $U$, $X'$, $Y'$, $X^r$
    and $Y^r$ be random variables on a common probability space
    such that
    \begin{enumerate}
        \item[(a)] $X' \disteq \mu' / a$, $Y' \disteq \nu' / a$, $X^r
            \disteq \mu^r$ and $Y^r \disteq \nu^r$
        \item[(b)] $U$ is uniform on $[0,1]$ and independent of $(X',Y', X^r, Y^r)$ and
        \item[(c)]$\PP\{X' \preceq Y'\} = 1$.
    \end{enumerate}
    The pair in (c) can be constructed via the Nachbin--Strassen theorem 
    \cite[thm.~1]{kamae1977stochastic}, since $\mu' / a \preqsd
    \nu' / a$.  Now let
    \begin{equation*}
        X := \1\{U \leq a\} X' + \1\{U > a\} X^r
        \quad \text{and} \quad
        Y := \1\{U \leq a\} Y' + \1\{U > a\} Y^r.
    \end{equation*}
    Evidently $(X,Y) \in \ccC(\mu, \nu)$.  Moreover, for this pair,
    we have
    \begin{equation*}
        \PP\{X \preceq Y\} 
        \geq \PP\{X \preceq Y, \; U \leq a \} 
        = \PP\{X' \preceq Y', \; U \leq a \} .
    \end{equation*}
    By independence the right hand side is equal to $\PP\{X' \preceq Y'\}
    \PP\{U \leq a \} = a$, so $\PP\{X \preceq Y\} \geq a := \alpha_O(\mu, \nu)$. 
    We conclude that
    \begin{equation}
        \label{eq:affosmo}
        \alpha_O(\mu, \nu) =
        \max_{(X, Y) \in \ccC(\mu, \nu)} \PP\{ X \preceq Y \}.
    \end{equation}
    Next, observe that, for any $(X, Y) \in \ccC(\mu, \nu)$ and $h \in ibS$, we have
    \begin{align*}
        \mu(h) - \nu( h)
        & = \EE h(X) - \EE h(Y)
        \\
        & = \EE [ h(X) - h(Y)] \1\{ X \preceq Y \} + \EE [ h(X) - h(Y)] \1\{ X \npreceq Y \} 
        \\
        & \leq \EE [ h(X) - h(Y)] \1\{ X \npreceq Y \} .
    \end{align*}
    Specializing to $h = \1_I$ for some $I \in i\bB$, we have
        $\mu(I) - \nu(I) \leq \PP\{ X \npreceq Y \} = 1 - \PP\{ X \preceq Y \}$.
    From this bound and \eqref{eq:affosmo}, 
    the proof of \eqref{eq:affco} will be complete if we can show that 
    \begin{equation}
        \label{eq:ci3}
        \sup_{(X, Y) \in \ccC(\mu, \nu)} \PP\{X \preceq Y\}
        \geq 
        1 - \sup_{I \in i\bB} \{ \mu(I) - \nu(I) \}.
    \end{equation}
    To prove \eqref{eq:ci3}, let 
    $\bB \otimes \bB$ be the product $\sigma$-algebra on $S \times S$ and  let
    $\pi_i$ be the $i$-th coordinate projection, so that $\pi_1(x,y) = x$ and
    $\pi_2(x,y) = y$ for any $(x,y) \in S \times S$.  As usual, given $Q \subset S
    \times S$, we let $\pi_1(Q)$ be all $x \in S$ such that $(x,y) \in Q$, and
    similarly for $\pi_2$.   Recall that $\cC$ is the closed sets in $S$ and
    $d\cC$ is the decreasing sets in $\cC$.
    Strassen's theorem \cite{strassen1965existence} implies
    that, for any $\epsilon \geq 0$ and any closed set $K
    \subset S \times S$, there exists a probability measure $\xi$ on $(S \times S, \bB
    \otimes \bB)$ with marginals $\mu$ and $\nu$ such that $\xi(K) \geq 1
    - \epsilon$ whenever
    \begin{equation*}
        \nu(F) \leq \mu(\pi_1(K \cap (S \times F))) + \epsilon,
        \qquad \forall \,  F \in \cC. 
    \end{equation*}
    Note that if $F \in \cC$, then, since $\preceq$ is a closed partial order,
    so is the smallest decreasing set $d(F)$ that containts $F$.
    Let $\epsilon := \sup_{D \in d\cC} \{ \nu(D) - \mu(D) \}$, so that
    \begin{equation*}
        \epsilon 
        \geq \sup_{F \in \cC} \{ \nu(d(F)) - \mu(d(F)) \} 
        \geq \sup_{F \in \cC} \{ \nu(F) - \mu(d(F)) \}.
    \end{equation*}
    Noting that $d(F)$ can be expressed as $\pi_1(\GG \cap (S \times F))$, it
    follows that, for any $F \in \cC$,
    \begin{equation*}
        \nu(F) \leq \mu(\pi_1(\GG \cap (S \times F))) + \epsilon.
    \end{equation*}
    Since $\preceq$ is closed, $\GG$ is closed, and Strassen's theorem
    applies.  From this theorem we obtain a probability measure $\xi$ on the
    product space $S \times S$ such that $\xi(\GG) \geq 1 - \epsilon$ and
    $\xi$ has marginals $\mu$ and $\nu$.  

    Because complements of increasing sets are decreasing and vice versa, we have
    \begin{equation}
        \label{eq:fis}
        \sup_{I \in i\bB} \{ \mu(I) - \nu(I) \}
        \geq \sup_{D \in d\cC} \{ \nu(D) - \mu(D) \}
        = \epsilon
        \geq 1 - \xi(\GG).
    \end{equation}
    Now consider the probability space $(\Omega, \fF, \PP) = (S \times S, \bB
    \otimes \bB, \xi)$, and let $X = \pi_1$ and $Y = \pi_2$.  We then have
        $\xi(\GG) 
        = \xi \setntn{(x,y) \in S \times S}{x \preceq y}
        = \PP \{X \preceq Y\}$.
    Combining this equality with \eqref{eq:fis} implies \eqref{eq:ci3}.
\end{proof}

\subsection{Proofs of Section~\ref{s:tov} Results}

We begin with an elementary lemma:

\begin{lemma}
    \label{cal}
    For any $\mu, \nu \in \mM$, we have
    $\mu \leq \nu$ whenever $\mu(h) \leq \nu (h)$ for all $h \in cbS_0$.
\end{lemma}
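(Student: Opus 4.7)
The plan is to leverage the metric structure of the Polish space $S$ to pass from the inequality on continuous $[0,1]$-valued functions to the inequality on indicators of closed sets, and then use regularity of finite Borel measures to reach arbitrary Borel sets.

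First, I would fix a compatible metric $\rho$ on $S$ and, for each closed set $C \subset S$, consider the sequence
\begin{equation*}
    h_n(x) := \max\bigl\{0,\; 1 - n\, \rho(x, C)\bigr\}, \qquad n \in \NN.
\end{equation*}
Each $h_n$ is continuous, takes values in $[0,1]$, and so belongs to $cbS_0$; moreover $h_n \downarrow \1_C$ pointwise as $n \to \infty$. By the hypothesis we have $\mu(h_n) \leq \nu(h_n)$ for every $n$, and since $\mu$ and $\nu$ are finite measures and the $h_n$ are uniformly bounded by $1$, dominated convergence yields $\mu(h_n) \to \mu(C)$ and $\nu(h_n) \to \nu(C)$. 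Passing to the limit gives $\mu(C) \leq \nu(C)$ for every closed $C \in \cC$.

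Next, I would extend this inequality from closed sets to all of $\bB$ by invoking inner regularity of finite Borel measures on a metrizable (in particular Polish) space: for any $B \in \bB$ and any finite Borel measure $\lambda$ on $S$,
\begin{equation*}
    \lambda(B) = \sup\setntn{\lambda(C)}{C \subset B,\; C \in \cC}.
\end{equation*}
Applied to $\mu$, for any closed $C \subset B$ we have $\mu(C) \leq \nu(C) \leq \nu(B)$, so taking the supremum over closed subsets of $B$ gives $\mu(B) \leq \nu(B)$. Since $B \in \bB$ was arbitrary, this is the desired conclusion $\mu \leq \nu$.

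There is no real obstacle here: the argument is standard, relying only on the metrizability of $S$ (to build the $h_n$) and on regularity of finite Borel measures on Polish spaces (to move from closed sets to Borel sets). The only point to flag is that the measures in $\mM$ need not be probability measures, so I keep track of finiteness to justify dominated convergence; this is harmless because $h_n \leq 1$ and $\mu(S), \nu(S) < \infty$.
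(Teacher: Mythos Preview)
Your proof is correct and follows essentially the same route as the paper's: approximate $\1_C$ for closed $C$ by continuous $[0,1]$-valued functions built from $\rho(\cdot,C)$, deduce $\mu(C)\leq\nu(C)$ for all closed $C$, and then invoke inner regularity of finite Borel measures on a Polish space to extend to all $B\in\bB$. The only cosmetic differences are that the paper parametrizes the approximants by $\epsilon\downarrow 0$ rather than $n\to\infty$ and uses a sandwich $\mu(F)\leq\nu(F_\epsilon)$ with continuity of $\nu$ from above in place of your dominated-convergence step.
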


\begin{proof}
    Suppose that $\mu (h) \leq \nu (h)$ for all $h \in cbS_0$.  We claim that
    \begin{equation} \label{calamite}
        \text{$\mu(F) \leq \nu(F)$ for any closed set $F \subset S$.}
    \end{equation}
    To see this, let $\rho$ be a metric compatible with the topology of $S$
    and let $F$ be any closed subset of $S$.  Let
        $f_{\epsilon}(x) 
        := \max\{1-\rho(x, F)/\epsilon,\, 0\}$ for  
        $\epsilon > 0, \, x \in S$,
    where $\rho(x, F) = \inf_{y \in F} \rho(x,y)$.  Since $\rho(\cdot, F)$ is
    continuous and $0 \leq
    f_{\epsilon} \leq 1$, we have  $f_{\epsilon} \in cbS_0$.  Let
    $F_{\epsilon} = \{x \in S : \rho(x, F) < \epsilon\}$ for $\epsilon > 0$.
    Note that $f_{\epsilon}(x) = 1$ for all $x \in F$, and that
    $f_{\epsilon}(x) = 0$ for all $x \not\in F_{\epsilon}$.  Thus,
    \begin{equation} \label{calamine}
        \mu(F) 
        \leq \mu (f_{\epsilon}) 
        \leq \nu (f_{\epsilon}) 
        \leq \nu(F_{\epsilon}).
    \end{equation}
    Since $F = \cap_{\epsilon > 0} F_{\epsilon}$, we have $\lim_{\epsilon
    \downarrow 0} \nu(F_{\epsilon}) = \nu(F)$, so letting $\epsilon
    \downarrow 0$ in (\ref{calamine}) yields $\mu(F) \leq \nu(F)$. Hence
    (\ref{calamite}) holds.

    Let $B \in \bB$ and fix $\epsilon > 0$.  Since all probability measures on
    a Polish space are regular, there exists a closed set $F
    \subset B$ such that $\mu(B) \leq \mu(F) + \epsilon$.  Thus by
    (\ref{calamite}), we have
        $\mu(B) \leq \mu(F) + \epsilon \leq \nu(F) + \epsilon \leq \nu (B) +
        \epsilon$.
    Since $\epsilon > 0$ is arbitrary, this yields $\mu(B) \leq \nu(B)$.
    Hence $\mu \leq \nu$.
\end{proof}

\begin{proof}[Proof of theorem~\ref{t:bkt}]
    Let $\{\mu_n\}$ be a Cauchy sequence in $(\pP, \gamma)$.  Our first claim
    is that $\{\mu_n\}$ is tight.  To show this, fix $\epsilon > 0$.  Let $\mu :=
    \mu_N$ be such that 
    \begin{equation}
        \label{eq:ngn}
        n \geq N \implies \gamma(\mu, \mu_n) < \epsilon.
    \end{equation}
    Let $K$ be a compact subset of $S$ such that $\mu(K) \geq 1 - \epsilon$
    and let $\bar K := i(K) \cap d(K)$.  We have
    \begin{equation*}
        \mu_n(\bar K^c)
        = \mu_n( i(K)^c \cup d(K)^c )
        \leq \mu_n( i(K)^c ) + \mu_n(d(K)^c ).
    \end{equation*}
    For $n \geq N$, this bound, \eqref{eq:idb}, \eqref{eq:ngn} and the
    definition of $K$ yield
    \begin{equation*}
        \mu_n(\bar K^c)
        < \mu( i(K)^c ) + \mu(d(K)^c ) + 2\epsilon
        \leq \mu( K^c ) + \mu(K^c ) + 2\epsilon 
        \leq 4 \epsilon.
    \end{equation*}
    Hence $\{\mu_n\}_{n \geq N}$ is tight. It follows that 
    $\{\mu_n\}_{n \geq 1}$ is likewise tight.
    As a result, by Prohorov's theorem, it has a subsequence that converges weakly
    to some $\mu^{*} \in \pP$.  We aim to show that $\gamma(\mu_{n},
    \mu^{*}) \rightarrow 0$.

    To this end, fix $\epsilon > 0$ and let $n_{\epsilon}$ be such that
    $\gamma(\mu_m, \mu_{n_\epsilon}) < \epsilon$ whenever $m \geq n_\epsilon$.
    Fix $m \geq n_{\epsilon}$ and let $\nu := \mu_{m}$.
    For all $n \geq n_{\epsilon}$, we have $\gamma(\nu, \mu_{n}) < \epsilon$.
    Fixing any such $n \geq n_{\epsilon}$, we observe that
    since $g(\mu_{n}, \nu) < \epsilon$, there exists $(\tilde{\mu}_{n},
    \tilde{\nu}_{n}) \in \Phi(\mu_{n}, \nu)$ with $\|\tilde{\mu}_{n}\| =
    \|\tilde{\nu}_{n}\| > 1-\epsilon$.  Multiplying $\tilde{\mu}_{n}$ and
    $\tilde{\nu}_{n}$ by $(1-\epsilon)/\|\tilde{\mu}_n\| < 1$, denoting the
    resulting measures by $\tilde{\mu}_{n}$ and $\tilde{\nu}_{n}$ again, we have
    \begin{equation} \label{calembour}
        \tilde{\mu}_{n} 
        \leq \mu_{n}, \quad \tilde{\nu}_{n} 
        \leq \nu, \quad \|\tilde{\mu}_{n}\| 
        = \|\tilde{\nu}_{n}\| 
        = 1-\epsilon, \quad \tilde{\mu}_{n} \preqsd \tilde{\nu}_{n}.
    \end{equation}
    Note that $\{\tilde{\nu}_{n}\}$ is tight.  Since $\{\mu_{n}\}$ is tight, so is
    $\{\tilde{\mu}_{n}\}$.  Thus there exist subsequences $\{\mu_{n_{i}}\}_{i \in
    \NN}, \{\tilde{\mu}_{n_{i}}\}_{i \in \NN}$, and $\{\tilde{\nu}_{n_{i}}\}_{i \in
    \NN}$ of $\{\mu_{n}\}, \{\tilde{\mu}_{n}\}$, and $\{\tilde{\nu}_{n}\}$
    respectively such
    that, for some $\tilde{\mu}^{*},  \tilde{\nu}^{*} \in \mM$ with
        $\|\tilde{\mu}^{*}\| = \|\tilde{\nu}^{*}\| = 1-\epsilon$,
    we have
    \begin{equation*}
        \mu_{n_{i}} \toweak \mu^{*}, 
        \quad \tilde{\mu}_{n_{i}} \toweak \tilde{\mu}^{*}, 
        \quad \tilde{\nu}_{n_{i}} \toweak \tilde{\nu}^{*}, 
        \quad \A i \in \NN,\; \tilde{\mu}_{n_{i}} \preqsd \tilde{\nu}_{n_{i}}.
    \end{equation*}
    Given $h \in cbS_0$, since $\tilde{\mu}_{n_{i}} (h) \leq \mu_{n_{i}} (h)$ and
    $\tilde{\nu}_{n_{i}} (h) \leq \nu (h)$ for all $i \in \NN$ by (\ref{calembour}), we have
    $\tilde{\mu}^{*} (h) \leq  \mu^{*} (h)$ and $\tilde{\nu}^{*} (h) \leq \nu
    (h)$ by weak
    convergence.  Thus $\tilde{\mu}^{*} \leq \mu^{*}$ and $\tilde{\nu}^{*} \leq
    \nu$ by lemma~\ref{cal}.  We have $\tilde{\mu}^{*} \preqsd \tilde{\nu}^{*}$ by
    \cite[proposition~3]{kamae1977stochastic}.  It follows that $(\tilde{\mu}^{*},
    \tilde{\nu}^{*}) \in \Phi(\mu^{*}, \nu)$.  We have $g(\mu^{*}, \nu) \leq 1 -
    \|\tilde{\mu}^{*}\| = \epsilon$.

    By a symmetric argument, we also have $g(\nu, \mu^{*}) \leq \epsilon$.  Hence
    $\gamma(\nu, \mu^{*}) \leq 2 \epsilon$.  Recalling the definition of $\nu$, 
    we have now shown that, $\A m \geq n_{\epsilon}$, $\gamma(\mu_{m}, \mu^{*}) \leq 2 \epsilon$.
    Since $\epsilon$ was arbitrary this concludes the proof.
\end{proof}

\subsection{Proofs of Section~\ref{s:applic} Results}

We begin with some lemmata.  

\begin{lemma}
    \label{l:ed}
    If $P$ is monotone, then
        $\sigma(P) = \inf_{(\mu, \nu) \in \pP \times \pP}  
                \alpha_O(\mu P, \nu P)$.
\end{lemma}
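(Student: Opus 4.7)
My plan is to verify two inequalities whose conjunction is the asserted equality. The direction $\sigma(P) \geq \inf_{(\mu,\nu) \in \pP \times \pP} \alpha_O(\mu P, \nu P)$ is immediate: for each $(x,y) \in S \times S$ the Dirac pair $(\delta_x, \delta_y)$ lies in $\pP \times \pP$ with $\delta_x P = P_x$ and $\delta_y P = P_y$, so the right-hand infimum is taken over a larger set than the one defining $\sigma(P)$.

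The substantive content is the reverse inequality $\alpha_O(\mu P, \nu P) \geq \sigma(P)$ for every $(\mu, \nu) \in \pP \times \pP$. I would base the argument entirely on the dual representation in theorem~\ref{t:affido}, which gives $\alpha_O(\mu P, \nu P) = 1 - \sup_{I \in i\bB}\{(\mu P)(I) - (\nu P)(I)\}$, since $\mu P$ and $\nu P$ are probabilities. It therefore suffices to show that $(\mu P)(I) - (\nu P)(I) \leq 1 - \sigma(P)$ uniformly in $I \in i\bB$.

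To see this, fix any such $I$ and rewrite
\begin{equation*}
(\mu P)(I) - (\nu P)(I) = \iint \big( P_x(I) - P_y(I) \big)\, \mu(dx)\, \nu(dy),
\end{equation*}
which is valid because $\mu$ and $\nu$ each integrate to one. A second invocation of theorem~\ref{t:affido}, this time applied to $(P_x, P_y)$, yields the pointwise estimate $P_x(I) - P_y(I) \leq 1 - \alpha_O(P_x, P_y) \leq 1 - \sigma(P)$, where the last inequality follows from the definition of $\sigma(P)$ in \eqref{eq:odob}. Inserting this uniform bound under the integral gives the desired inequality; taking the supremum over $I$ and then the infimum over $(\mu, \nu)$ closes the argument.

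If there is a subtlety, it is recognizing that theorem~\ref{t:affido} should be invoked twice at two different scales---first to re-express $\alpha_O(\mu P, \nu P)$ as a supremum over increasing Borel sets, and then to turn the analogous supremum form of $\alpha_O(P_x, P_y)$ into a pointwise bound on $P_x(I) - P_y(I)$. Somewhat surprisingly, monotonicity of $P$ plays no explicit role in this argument; it is implicit in the ambient context, but the stated equality in fact holds for every Markov kernel with the same proof.
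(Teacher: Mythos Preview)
Your proof is correct and follows essentially the same route as the paper: both invoke the dual representation $\alpha_O(\cdot,\cdot)=1-\sup_{I\in i\bB}\{\cdot(I)-\cdot(I)\}$ from theorem~\ref{t:affido} and exploit the identity $(\mu P)(I)-(\nu P)(I)=\iint\bigl(P_x(I)-P_y(I)\bigr)\,\mu(dx)\,\nu(dy)$, the only difference being that you bound the integrand directly while the paper argues contrapositively that a large integral forces a large value at some point. Your closing observation is also accurate---monotonicity of $P$ is not used in either argument.
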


\begin{proof}
    Let $P$ be a monotone Markov kernel. It suffices to show that 
    the inequality 
    $\sigma(P) \leq \inf_{(\mu, \nu) \in \pP \times \pP}  
    \alpha_O(\mu P, \nu P)$ holds, since the reverse inequality is obvious.
    By the definition of $\sigma(P)$ and the identities in \eqref{eq:affco}, the
    claim will be established if we can show that
    \begin{equation}
        \label{eq:eled}
        \sup_{x,y} 
            \sup_{I \in i\bB} \{ P(x, I) - P(y, I) \}
        \geq
        \sup_{\mu,\nu} 
            \sup_{I \in i\bB} \{ \mu P(I) - \nu P(I) \}.
    \end{equation}
    where $x$ and $y$ are chosen from $S$ and $\mu$ and $\nu$ are chosen from
    $\pP$.  Let $s$ be the value of the right hand side of \eqref{eq:eled} and
    let $\epsilon > 0$.  Fix $\mu, \nu \in \pP$ and $I \in i\bB$ such that
    $\mu P(I) - \nu P(I) > s - \epsilon$, or, equivalently,
    \begin{equation*}
        \int \{ P(x, I) - P(y, I) \} (\mu \times \nu)(dx, dy) 
            > s - \epsilon.     
    \end{equation*}
    From this expression we see that there are $\bar x, \bar y \in S$ such
    that $ P(\bar x, I) - P(\bar y, I)  > s - \epsilon$.  Hence
        $\sup_{x,y} 
            \sup_{I \in i\bB} \{ P(x, I) - P(y, I) \} \geq s$,
    as was to be shown.
\end{proof}

\begin{lemma} 
    \label{l:cacher}
    If $\mu, \nu \in \mM$ and $(\tilde{\mu}, \tilde{\nu})$ is an ordered
    component pair of $(\mu, \nu)$, then
    \begin{equation*} 
        g(\mu P, \nu P) \leq g((\mu - \tilde{\mu}) P, (\nu - \tilde{\nu}) P).
    \end{equation*}
\end{lemma}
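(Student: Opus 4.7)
The plan is to unfold the definition $g(\mu,\nu) = \|\mu\| - \alpha_O(\mu,\nu)$ on both sides and exploit mass-preservation of Markov kernels. Since $\|\mu P\| = \|\mu\|$ and $\|(\mu-\tilde\mu)P\| = \|\mu\| - \|\tilde\mu\|$, the stated inequality is equivalent to
\begin{equation*}
    \alpha_O(\mu P, \nu P) \;\geq\; \|\tilde\mu\| + \alpha_O\bigl((\mu - \tilde\mu) P,\, (\nu - \tilde\nu) P\bigr).
\end{equation*}
So the task reduces to exhibiting an ordered component pair of $(\mu P, \nu P)$ whose mass is at least the right-hand side for each choice of ordered component pair of $((\mu-\tilde\mu)P, (\nu-\tilde\nu)P)$.

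The natural construction is additive: given $(\hat\mu, \hat\nu) \in \Phi((\mu-\tilde\mu)P, (\nu-\tilde\nu)P)$, form the candidate pair $(\tilde\mu P + \hat\mu,\; \tilde\nu P + \hat\nu)$. I would then verify three conditions required for membership in $\Phi(\mu P, \nu P)$. Domination is immediate from $\hat\mu \leq \mu P - \tilde\mu P$ and the analogous bound on the $\nu$-side. For the stochastic dominance condition $\tilde\mu P + \hat\mu \preqsd \tilde\nu P + \hat\nu$, I would first invoke monotonicity of $P$ (in force throughout Section~\ref{s:applic}) together with $\tilde\mu \preqsd \tilde\nu$ to get $\tilde\mu P \preqsd \tilde\nu P$, and then use the elementary fact that $\preqsd$ is preserved under nonnegative addition of measures: for any $I \in i\bB$, adding the two defining inequalities gives $(\tilde\mu P + \hat\mu)(I) \leq (\tilde\nu P + \hat\nu)(I)$, with matching total masses obtained similarly.

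With these verifications in hand, the definition of $\alpha_O$ yields
\begin{equation*}
    \alpha_O(\mu P, \nu P) \;\geq\; \|\tilde\mu P + \hat\mu\| \;=\; \|\tilde\mu\| + \|\hat\mu\|,
\end{equation*}
again using mass-preservation of $P$. Taking the supremum over all $(\hat\mu, \hat\nu) \in \Phi((\mu-\tilde\mu)P, (\nu-\tilde\nu)P)$ produces the reduced inequality above and completes the argument.

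I do not anticipate a serious obstacle; the only step requiring a moment of care is ensuring that stochastic dominance survives addition (a short bookkeeping check using the definition of $\preqsd$) and tracking that all the equalities $\|\cdot P\| = \|\cdot\|$ are applied to \emph{subprobability} components rather than to signed differences. The use of monotonicity of $P$ to transport $\tilde\mu \preqsd \tilde\nu$ forward through the kernel is the single substantive ingredient.
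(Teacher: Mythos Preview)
Your proposal is correct and follows essentially the same route as the paper: both arguments build an ordered component pair for $(\mu P,\nu P)$ by adding $\tilde\mu P$ and $\tilde\nu P$ to an ordered component pair of the residual pair $((\mu-\tilde\mu)P,(\nu-\tilde\nu)P)$, invoking monotonicity of $P$ to carry $\tilde\mu \preqsd \tilde\nu$ forward. The only cosmetic difference is that the paper picks a single maximal ordered component pair of the residual (via Proposition~\ref{p:em}) rather than taking a supremum, which amounts to the same computation.
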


\begin{proof}
    Fix $\mu, \nu$ in $\mM$ and $(\tilde{\mu}, \tilde{\nu}) \in \Phi(\mu, \nu)$.
    Consider the residual measures
    $\hat{\mu} := \mu - \tilde{\mu} $ and $\hat{\nu} := \nu - \tilde{\nu}$.
    Let $(\mu', \nu')$ be a maximal ordered component pair of $(\hat{\mu} P,
    \hat{\nu} P)$, and define
    \begin{equation*}
        \mu^{*} := \mu' + \tilde{\mu} P 
        \quad  \text{and} \quad
        \nu^{*} := \nu' + \tilde{\nu} P.
    \end{equation*}
    We claim that
    $(\mu^{*}, \nu^{*})$ is an ordered component pair for $(\mu P, \nu P)$.
    To see this, note that
    \begin{equation*}
        \mu^{*} 
        = \mu' + \tilde{\mu} P 
        \leq \hat{\mu} P + \tilde{\mu} P 
        = (\hat{\mu} + \tilde{\mu}) P 
        = \mu P,
    \end{equation*}
    and, similarly, $\nu^{*} \leq \nu P$.  The measures $\mu^*$ and $\nu^*$ also have
    the same mass, since
    \begin{equation*} 
        \|\mu^{*}\| 
         = \|\mu'\| + \|\tilde{\mu} P\| 
         = \|\mu'\| + \|\tilde{\mu}\|
         = \|\nu'\| + \|\tilde{\nu}\| 
         = \|\nu'\| + \|\tilde{\nu} P\| 
         = \|\nu^{*}\|.
    \end{equation*}
    Moreover, since $\tilde{\mu} \preqsd
    \tilde{\nu}$ and $P$ is monotone, we have $\tilde{\mu} P \preqsd \tilde{\nu} P$.
    Hence $\mu^{*} \preqsd \nu^{*}$, 
    completing the claim that $(\mu^{*}, \nu^{*})$ is an ordered component
    pair for $(\mu P, \nu P)$.  As a result,
    \begin{align*}
        g(\mu P, \nu P) & \leq \|\mu P\| - \|\mu^{*}\| 
        = \|\mu P\| - \|\mu'\| - \|\tilde{\mu} P\| 
        \\
        & = \|\mu\| - \|\mu'\| - \|\tilde{\mu}\| = \|\hat{\mu}\| - \|\mu'\| 
        = \|\hat{\mu} P\| - \|\mu'\| = g(\hat{\mu} P, \hat{\nu} P).
        \qedhere
    \end{align*}
\end{proof}

\begin{proof}[Proof of theorem~\ref{t:ue}]
    Let $\mu, \nu \in \pP$.  Let $(\tilde{\mu}, \tilde{\nu})$ be a maximal
    ordered component pair for $(\mu, \nu)$.  Let $\hat{\mu} = \mu -
    \tilde{\mu}$ and $\hat{\nu} = \nu - \tilde{\nu}$ be the residuals.  Since
    $\|\tilde{\mu}\| = \|\tilde{\nu}\|$, we have $\|\hat{\mu}\| =
    \|\hat{\nu}\|$.  Suppose first that $\|\hat{\mu}\| > 0$.  Then $\hat{\mu}
    P/\|\hat{\mu}\|$ and $\hat{\nu} P/\|\hat{\mu}\|$ are both in $\pP$.  Thus, by
    lemma~\ref{l:ed},
    \begin{equation*}
        1 - \alpha_O(\hat{\mu} P /\|\hat{\mu}\|, \hat{\nu} P/\|\hat{\mu}\|) 
        \leq 1 - \sigma(P).
    \end{equation*}
    Applying the positive homogeneity property in lemma~\ref{l:poaf2} 
    yields
    \begin{equation*}
        \|\hat{\mu}\| - \alpha_O(\hat{\mu} P , \hat{\nu} P) 
        \leq (1 - \sigma(P)) \, \|\hat{\mu}\|,
    \end{equation*}
    Note that this inequality trivially holds if $\|\hat{\mu}\| = 0$.  
    From the definition of $g$, we can write the same inequality as
    $g(\hat{\mu} P, \hat{\nu} P) \leq (1 - \sigma(P)) g(\mu, \nu)$.
    If we apply lemma~\ref{l:cacher} to the latter we obtain
    \begin{equation*}
        g(\mu P, \nu P) \leq (1 - \sigma(P)) g(\mu, \nu).
    \end{equation*}
    Reversing the roles of $\mu$ and $\nu$, we also have $g(\nu P, \mu P) \leq
    (1 - \sigma(P)) g(\nu, \mu)$.  Thus
    \begin{multline*}
        \gamma(\mu P, \nu P) 
        = g(\mu P, \nu P) + g(\nu P, \mu P)
        \\
        \leq (1 - \sigma(P)) [g(\mu, \nu) + g(\nu, \mu)] 
        = (1 - \sigma(P)) \gamma(\mu, \nu),
    \end{multline*}
    verifying the claim in \eqref{eq:fbod}.  
\end{proof}

\begin{proof}[Proof of lemma~\ref{l:cbi}]
        To see that \eqref{eq:ev} holds, fix $\xi > \sigma(P)$ and suppose first that $\sigma(P)  = 1$.  Then 
    \eqref{eq:ev} holds because the right hand side of 
    \eqref{eq:ev} can be made strictly negative by choosing $\mu, \nu \in \pP$ 
    to be distinct.  Now suppose that $\sigma(P) < 1$ holds. It suffices to show that 
    \begin{equation}
        \label{eq:ev2}
        \forall \, \epsilon > 0, \;
        \exists \, x, y \in S 
        \text{ such that } 
        y \preceq x, \; x \npreceq y \text{ and } 
        \alpha_0(P_x, P_y) < \sigma(P) + \epsilon.
    \end{equation}
    Indeed, if we take \eqref{eq:ev2} as valid, set
    $\epsilon := \xi - \sigma(P)$ and choose $x$ and $y$ to satisfy the conditions in
    \eqref{eq:ev2}, then we have $\gamma(P_x, P_y) 
    = 2 - \alpha_0(P_x, P_y) - \alpha_0(P_y, P_x) > 2 - \xi  - 1 = 1 - \xi  = (1 - \xi)
    \gamma(\delta_x, \delta_y)$.  Therefore \eqref{eq:ev} holds.

    To show that \eqref{eq:ev2} holds, fix $\epsilon > 0$. We can use $\sigma(P) < 1$ and the definition of
    $\sigma(P)$ as an infimum to choose an $\delta \in (0, \epsilon)$ and points $\bar x, \bar y \in
    S$ such that $\alpha_0(P_{\bar x}, P_{\bar y}) < \sigma(P) + \delta < 1$.
    Note that $\bar x \preceq \bar y$ cannot hold here, because then 
    $\alpha_0(P_{\bar x}, P_{\bar y}) = 1$, a contradiction.  So suppose 
    instead that $\bar x \npreceq \bar y$. 
    Let $y$ be a lower bound of $\bar x$ and $\bar y$
    and let $x := \bar x$.  We claim that \eqref{eq:ev2} holds for the pair
    $(x,y)$.

    To see this, observe that, by the monotonicity result in lemma~\ref{l:mid}
    and $y \preceq \bar y$ we have
        $\alpha_0(P_x, P_y)
        = \alpha_0(P_{\bar x}, P_y)
        \leq \alpha_0(P_{\bar x}, P_{\bar y})
        < \sigma(P) + \delta
        < \sigma(P) + \epsilon$.
    Moreover, $y \preceq x$ because $x = \bar x$ and $y$ is by definition a lower
    bound of $\bar x$.  Finally, $x \npreceq y$ because if not then $\bar x =
    x \preceq y \preceq \bar y$, contradicting our assumption that $\bar x
    \npreceq \bar y$.
\end{proof}

\bibliographystyle{plain}

\bibliography{localbib}

\end{document}